\documentclass{amsart}
\usepackage{amscd}
\usepackage{textcomp}
\usepackage{amsfonts}
\usepackage{amsthm}
\usepackage{geometry}
\usepackage{amsmath}
\usepackage{txfonts}
\usepackage{graphicx}
\usepackage{wrapfig}
\usepackage{subcaption}
\usepackage{ascmac}
\usepackage{bm}
\usepackage{indentfirst}
\usepackage{comment}
\usepackage{amssymb}
\usepackage{textcomp}
\usepackage{xcolor}
\usepackage{tikz}
\usetikzlibrary{calc}
\usepackage[normalem]{ulem}
\usepackage{hyperref}

\usepackage[british]{babel}

\providecommand{\keywords}[1]{\vspace{2mm}\noindent\textbf{Keywords:} #1\par}
\newtheorem{theorem}{Theorem}[section]
\newtheorem{proposition}[theorem]{Proposition}
\newtheorem{lemma}[theorem]{Lemma}

\theoremstyle{definition}
\newtheorem{definition}[theorem]{Definition}
\newtheorem{remark}[theorem]{Remark}

\title{Hypercube Embeddings and Median Structure in the Intersection Lattice of Discriminantal Arrangements $\mathcal{B}(n,k)$}

\author{Pragnya Das}

\address{Department of Mathematics, IIIT Kota, India.}

\email{pragnya.hmas@iiitkota.ac.in}

\thanks{}
\subjclass{05B35 (primary); 52C35, 05C12, 60C05 (secondary).}
\keywords{Discriminantal arrangements; intersection lattice; median graphs; partial cubes; hypercube embeddings; Poisson approximation.}

\begin{document}

% ------------------------------------------------
\begin{abstract}
We investigate the metric structure of the intersection lattice $\mathcal{L}(\mathcal{B}(n,k))$ of the discriminantal arrangement using circuit supports.
We show that the cover graph associated with $\mathcal{L}(\mathcal{B}(n,k))$ is isometrically embedded into a hypercube, making it a partial cube and a median graph, with distances given by the Hamming distance and geodesics described by symmetric differences. We also prove a Poisson limit and a sharp threshold for overlaps of random circuit families, revealing an underlying hypercube geometry.
\end{abstract}

\maketitle

% ------------------------------------------------
\section{Introduction}

Hyperplane arrangements and their intersection lattices play a central role in combinatorics, geometry, and topology. The intersection lattice encodes important combinatorial information about the arrangement and is closely related to algebraic and topological invariants. (See \cite{OrlikTerao} for reference).\\
\noindent
Among the important families of arrangements is the discriminantal arrangement $\mathcal{B}(n,k)$ introduced by Manin and Schechtman~\cite{ManinSchechtman}. Discriminantal arrangement is an arrangement of hyperplanes arising from degeneracy conditions among parallel translates of the $n$ hyperplanes in general position in $\mathbb{R}^k$, generalizing the classical braid arrangement. Its combinatorial structure is closely connected with higher Bruhat orders and with concepts from oriented matroids theory. \cite{ManinSchechtman,BjornerOM}\\
\noindent
A basic property of the discriminantal arrangement is that its hyperplanes are indexed by $(k+1)$-subsets of $[n]$(see, e.g., \cite{ManinSchechtman, OrlikTerao}). These subsets correspond to minimal degeneracy conditions and will be referred to as circuits of the arrangement. As a consequence, the set of circuits naturally induces the structure of the Johnson graph $J(n,k+1)$.\\
\noindent
In this paper we study graph structures naturally associated with the discriminantal arrangement. Two graphs arise in this setting. The first is the Johnson graph formed by the circuits of the arrangement. The second is the cover graph of the intersection lattice $\mathcal{L}(\mathcal{B}(n,k))$, whose vertices correspond to nonempty intersections of hyperplanes and whose edges represent covering relations in the lattice.\\
\noindent
While the Johnson graph is a classical object in algebraic graph theory and forms an important family of distance-regular graphs (see \cite{GodsilRoyle2001,BrouwerCohenNeumaier}), the cover graph of the intersection lattice exhibits a different behaviour. It is naturally graded by rank and reflects a collection of degeneracy conditions among circuit hyperplanes. Understanding the metric structure of this graph provides insight into how local combinatorial constraints organize the global geometry of the lattice.\\
\noindent
We begin by examining the circuit structure of $\mathcal{B}(n,k)$, which induces the Johnson graph $J(n,k+1)$ and encodes the interaction of individual circuits. Building on this viewpoint, we then pass to general lattice elements via their circuit supports and develop a metric description of the cover graph of $\mathcal{L}(\mathcal{B}(n,k))$. We show that distances between lattice elements are governed by symmetric differences of feasible supports, building an isometric embedding into a hypercube. As a consequence, the cover graph is a partial cube and, a median graph.\\
\noindent
We further describe the structure of shortest paths by introducing a dependency poset that determines the admissible sequences of circuit modifications, and show that geodesics correspond to linear extensions of this poset. We also establish that every interval in the lattice is isomorphic to a hypercube, revealing a strong local cubical structure.\\
\noindent
Finally, we investigate random circuit supports. We prove that overlaps between random supports admit a Poisson approximation in the sparse regime and exhibit a sharp threshold for intersection. As a consequence, distances between random lattice elements concentrate near their maximal value, reflecting the high-dimensional geometry of the underlying hypercube embedding.\\
\noindent
The paper is organized as follows. Section~\ref{prelims} reviews basic properties of the discriminantal arrangement and its intersection lattice. Section~\ref{MainR} briefly introduced all the results of the paper. Section~\ref{Johnson} studies the Johnson graph on circuits. Section~\ref{Metric} establishes the metric structure via circuit supports and proves the hypercube embedding and median property. Section~\ref{Convexity} analyzes convexity and cube structure of intervals. Section~\ref{Asymp} investigates the asymptotic behaviour of random supports.\\
\noindent
These results show that the intersection lattice of the discriminantal arrangement admits a unified cubical geometry, in which both local and global metric properties are governed by simple combinatorial interactions between circuit supports.

\section{Main Results}\label{MainR}
We summarize the principal structural and metric results established in this paper.\\
\noindent
Let $C_{n,k} = \binom{[n]}{k+1}$ denote the set of circuits, and for $X \in L(B(n,k))$, let $F(X)$ denote its (feasible) circuit support.

\begin{itemize}

\item \textbf{Metric structure.} 
We show that the distance between lattice elements is governed by symmetric differences of their supports:
\[
d(X,Y) = |F(X) \triangle F(Y)|.
\]
In particular, the cover graph of $L(B(n,k))$ admits an isometric embedding into the hypercube $\{0,1\}^{C_{n,k}}$.

\item \textbf{Cubical and median structure.}
We prove that the cover graph is a partial cube and, moreover, a median graph. The median of three lattice elements is given by a coordinate-wise majority rule on their supports.

\item \textbf{Geodesic structure.}
We characterize shortest paths between lattice elements in terms of admissible sequences of circuit modifications. These are governed by a dependency poset, and geodesics are in bijection with its linear extensions.

\item \textbf{Interval structure.}
We show that every interval $[X,Y]$ in $L(B(n,k))$ is isomorphic to a hypercube. In particular, intervals inherit a Boolean lattice structure determined by subsets of $F(Y)\setminus F(X)$.

\item \textbf{Asymptotic behaviour.}
We study random circuit supports and prove that overlaps admit a Poisson approximation in the sparse regime, together with a sharp threshold for intersection. As a consequence, distances between random lattice elements concentrate near their maximal value.

\end{itemize}
\noindent
Taken together, these results show that the intersection lattice of the discriminantal arrangement admits a unified cubical geometry, in which both local and global metric properties are governed by simple combinatorial interactions between circuit supports.
%----------------%-----------------------------------
\section{Preliminaries}\label{prelims}
We recall basic definitions and notation related to the discriminantal arrangement and its intersection lattice.
\subsection{The discriminantal arrangement}

Consider $n$ hyperplanes $\{H^0_i\}_{i \in [n]=\{1,...,n\}}$, where $n$ and $k$ be integers with $n \ge k+1$, in general position in $\mathbb{R}^{k}$ and all of their parallel translates be denoted by the set $\mathbb{S}$. The \emph{discriminantal arrangement}, denoted by $\mathcal{B}(n,k)$, describes the degeneracy conditions that occur when these translated hyperplanes become affinely dependent, i,e., $$\mathcal{B}(n, k)=\{D_I \subset  \mathbb{S} \mid I \subset [n], | I |=k+1\},$$ where the hyperplanes corresponding to the indices in $I$ are affinely dependent.\\
\noindent
A fundamental property of the discriminantal arrangement is that its hyperplanes are naturally indexed by subsets of the index set $[n] = \{1,2,\dots,n\}$ \cite{ManinSchechtman}. Geometrically, the hyperplane $D_I\in\mathcal{B}(n,k)$ represents the condition that the $k+1$ translated hyperplanes indexed by the elements of $I$ become affinely dependent. Since affine dependence among $k+1$ hyperplanes in $\mathbb{R}^k$ is a minimal degeneracy condition, the subsets $I$ become the circuits in the combinatorial structure of the arrangement. Consequently, the set of circuits of $\mathcal{B}(n,k)$ is in one-to-one correspondence with the collection of all $(k+1)$-subsets of $[n]$, and the set of all such circuits be denoted by $C_{n,k} = \binom{[n]}{k+1}$ through out this paper. 
\subsection{Intersection lattice}
The combinatorial structure of a hyperplane arrangement is encoded by its intersection lattice. For the discriminantal arrangement $\mathcal{B}(n,k)$ we denote this lattice by $\mathcal{L}(\mathcal{B}(n,k))$.\\
\noindent
The elements of $\mathcal{L}(\mathcal{B}(n,k))$ are all nonempty intersections of hyperplanes of the form $D_I$, where $I\in C_{n,k}$. Thus each lattice element can be written in the form
\[
X = \bigcap_{I \in \mathcal{F}} D_I
\]
for some family $\mathcal{F} \subseteq C_{n,k}$.\\
\noindent
The lattice is partially ordered by reverse inclusion. In other words, for two lattice elements $X$ and $Y$ we define
\[
X \le Y \quad \text{if and only if} \quad Y \subseteq X.
\]
\noindent
Under this ordering the minimal element $\hat{0}$ corresponds to the entire ambient space \cite{OrlikTerao}.

\subsection{Minimal defining families and Circuit support}

A lattice element $X \in \mathcal{L}(\mathcal{B}(n,k))$ can have several representations as an intersection of circuit hyperplanes. One of them is the minimal defining family of circuits. 
\begin{definition}
 A family of circuits $\{I_1,\dots,I_r\}$ is called a \emph{minimal defining family} for $X$ if
\[
X = \bigcap_{i=1}^{r} D_{I_i}
\]
and removing any circuit $I_j$ strictly enlarges the intersection.
\end{definition}
\noindent
Minimal defining families describe collections of non-redundant degeneracy conditions and provide a useful local description of lattice elements. While they provide one possible representation of lattice elements, they lack the ability to describe the global structural properties. In this paper, we instead use the canonical support representation defined as follows:
\noindent
For $X \in \mathcal{L}(\mathcal{B}(n,k))$, we define
\[
F(X) = \{ I \in C_{n,k} : X \subseteq D_I \}.
\]
\noindent
This representation is canonical and uniquely determines $X$, since $X = \bigcap_{I \in F(X)} D_I.$
\begin{definition}[Feasible support]
A family of circuits $F \subseteq C_{n,k}$ is called \emph{feasible} if the intersection
\[
X_F := \bigcap_{I \in F} D_I
\]
is nonempty.
\end{definition}

\begin{remark}
Only feasible families correspond to elements of the intersection lattice $L(B(n,k))$. In particular, every lattice element $X \in L(B(n,k))$ is uniquely represented by its support
\[
F(X) = \{ I \in C_{n,k} : X \subseteq D_I \},
\]
and satisfies
\[
X = \bigcap_{I \in F(X)} D_I \neq \emptyset.
\]
\noindent
Conversely, an arbitrary subset $F \subseteq C_{n,k}$ does not necessarily define a lattice element unless the associated intersection is nonempty. 
\end{remark}
\noindent
We will implicitly restrict all support families to feasible ones throughout the paper.
\subsection{Rank structure}

The intersection lattice $\mathcal{L}(\mathcal{B}(n,k))$ is a graded poset, where the rank of $X\in \mathcal{L}(\mathcal{B}(n,k))$ is defined by
\[
\operatorname{rk}(X) = \operatorname{codim}(X).
\]
Thus the rank records the number of independent degeneracy conditions defining $X$.
\\
The lattice decomposes into rank levels
\[
L(B(n,k)) = \bigsqcup_{r \ge 0} L_r, \quad 
L_r = \{ X \in L(B(n,k)) : \operatorname{rk}(X) = r \}.
\]
\noindent
While this grading provides a natural layering, the metric structure studied in this paper is governed primarily by circuit supports rather than rank.\\
 In the following sections, we use support representation $F(X)$, which provides a natural way to represent the lattice elements as subsets of $C_{n,k}$, to study the cover graph of $\mathcal{L}(\mathcal{B}(n,k))$. We further show that the cover graph admits an isometric embedding into a hypercube and exhibits a rich metric structure.

%%%%%%%%%%%%%%%%%%%%%%%%%%%%%%%%%%%%%%%%%%%%%%%%%%%%%%%%%%-----------------------------------------------------------------------------------Jonshon Graph------------------------------------%%%%%%%%%%%%%%%%%%%%%%%%%%%%%%%%%%%%%%%%%%%%%%%%%-----

\section{The Johnson graph on circuits}\label{Johnson}

As described in the previous section, the circuits of the discriminantal arrangement $\mathcal{B}(n,k)$ are indexed by the $(k+1)$ subsets of $[n]$. In this section, we study the graph structure that is naturally induced by the collection these circuits. 

\subsection{Definition of the circuit graph}

\begin{definition}
The \emph{circuit graph} of the discriminantal arrangement $\mathcal{B}(n,k)$ is the graph whose vertex set is $C_{n,k}$, where two vertices $I$ and $J$ are adjacent if and only if
\[
|I \cap J| = k .
\]
\end{definition}
\noindent
Equivalently, two circuits are adjacent when one can be obtained from the other by replacing exactly one element as each circuit contains $k+1$ indices. 

\begin{proposition}
The circuit graph defined above is isomorphic to the Johnson graph $J(n,k+1)$.
\end{proposition}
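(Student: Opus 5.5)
The claim is essentially definitional, so the plan is to exhibit the identity map as the isomorphism and check that it preserves vertices and adjacency. I will recall the standard definition of the Johnson graph $J(n,m)$: its vertex set is $\binom{[n]}{m}$, and two $m$-subsets $A,B$ are adjacent if and only if $|A\cap B| = m-1$, or equivalently $|A\triangle B| = 2$. Taking $m = k+1$, the vertex set of $J(n,k+1)$ is $\binom{[n]}{k+1}$. This is exactly $C_{n,k}$, the set of circuits of $\mathcal{B}(n,k)$ as established in Section~\ref{prelims}.

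Next I will take $\varphi : C_{n,k} \to V(J(n,k+1))$ to be the identity map $I \mapsto I$, which is trivially a bijection. For adjacency, two circuits $I,J$ are adjacent in the circuit graph if and only if $|I\cap J| = k = (k+1)-1$. This is precisely the adjacency condition of $J(n,k+1)$, so $\varphi$ and its inverse both preserve edges.

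For completeness I will also verify the equivalent formulation stated after the definition. Since $|I|=|J|=k+1$, we have $|I\cap J| = k$ if and only if $|I\setminus J| = |J\setminus I| = 1$. This holds if and only if $J = (I\setminus\{a\})\cup\{b\}$ for some $a\in I$ and $b\notin I$, i.e.\ $J$ arises from $I$ by replacing one element.

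There is no real obstacle here. The only point needing care is to state which convention for $J(n,m)$ is being used, since some sources define adjacency by $|A\triangle B|=2$; the single-exchange computation above shows the two conventions agree. I will also note the degenerate case $n = k+1$: then $C_{n,k}$ is a single vertex and both graphs are $K_1$, so the statement still holds.
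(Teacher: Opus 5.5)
Your proposal is correct and takes the same approach as the paper: both identify the vertex set $C_{n,k}=\binom{[n]}{k+1}$ with that of $J(n,k+1)$ and note that the adjacency condition $|I\cap J|=k=(k+1)-1$ is exactly the Johnson adjacency. Your extra checks, the single-exchange reformulation and the degenerate case $n=k+1$, are valid additions that the paper omits.
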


\begin{proof}
By definition, the Johnson graph $J(n,r)$ has vertex set consisting of the $r$ subsets of $[n]$, where two vertices are adjacent whenever their intersection has size $r-1$. Taking $r = k+1$ yields the adjacency condition
\[
|I \cap J| = k ,
\]
which coincides with the adjacency relation defined above. Hence the circuit graph is precisely the Johnson graph $J(n,k+1)$.
\end{proof}
\noindent
Next, we list all the basic properties of the circuit structure of the discriminantal arrangement inherited by the combinatorial structure of Johnson graphs which is a well studied object in algebraic graph theory.
\subsection{Basic properties}

\begin{proposition}

The Johnson graph $J(n,k+1)$ has the following properties
(see e.g. \cite[Ch.~9]{GodsilRoyle2001}, \cite{BrouwerCohenNeumaier}) and so does the circuit structure of the discriminantal arrangement:
\begin{enumerate}
    \item The graph is regular of degree $(k+1)(n-k-1)$.
    \item The symmetric group $S_n$ acts transitively on vertices, hence the graph is vertex–transitive.
    \item The distance between vertices $I$ and $J$ satisfies
\[
d(I,J)=(k+1)-|I\cap J|.
\]
\item The diameter equals
\[
\min\{k+1,n-k-1\}.
\]
\end{enumerate}    
\end{proposition}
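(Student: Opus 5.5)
Since the circuit graph has already been identified with $J(n,k+1)$, I will prove the four items directly for $(k+1)$-subsets of $[n]$, writing $m=k+1$. Throughout, $n\ge m$; if $n=m$ the graph is a single vertex and every claim is trivial, so I assume $n>m$ where needed.

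\textbf{Regularity.} A neighbour of a vertex $I$ is obtained by deleting one element $a\in I$ and adding one element $b\in[n]\setminus I$. Distinct choices of the pair $(a,b)$ give distinct sets $(I\setminus\{a\})\cup\{b\}$. Hence $I$ has exactly $m(n-m)=(k+1)(n-k-1)$ neighbours.

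\textbf{Vertex-transitivity.} Each $\sigma\in S_n$ acts on subsets by $I\mapsto\sigma(I)$. This preserves cardinality and $|\sigma(I)\cap\sigma(J)|=|I\cap J|$, so it is a graph automorphism. Given any two $m$-subsets $I,J$, choose a bijection $I\to J$ and extend it to a permutation of $[n]$ by any bijection $[n]\setminus I\to[n]\setminus J$. This permutation sends $I$ to $J$, so the action is transitive on vertices.

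\textbf{Distance formula.} Set $t=m-|I\cap J|=|I\setminus J|=|J\setminus I|$.
\begin{itemize}
\item \emph{Lower bound.} Along an edge, $|I\cap J|$ changes by at most one, since one element is removed and one is added. Therefore any path from $I$ to $J$ has length at least $t$.
\item \emph{Upper bound.} List $I\setminus J=\{a_1,\dots,a_t\}$ and $J\setminus I=\{b_1,\dots,b_t\}$. Swapping $a_i$ for $b_i$ one at a time gives a path of length exactly $t$.
\end{itemize}
Hence $d(I,J)=(k+1)-|I\cap J|$.

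\textbf{Diameter.} By the distance formula, the diameter is the maximum of $|I\setminus J|$ over pairs of vertices. Since $I\setminus J\subseteq I$, we have $|I\setminus J|\le m$. Since $J\setminus I\subseteq[n]\setminus I$ and $|I\setminus J|=|J\setminus I|$, we also have $|I\setminus J|\le n-m$. The bound $\min\{m,n-m\}$ is attained by taking $J$ to contain $\min\{m,n-m\}$ elements of $[n]\setminus I$, completed by $m-\min\{m,n-m\}$ elements of $I$. The only step that needs any care is this attainment construction, and it just checks that the required counts are available. This gives the diameter $\min\{k+1,n-k-1\}$.
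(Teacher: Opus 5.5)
Your proof is correct and complete, but the paper gives no argument here at all. It lists the four properties, defers to Godsil--Royle and Brouwer--Cohen--Neumaier, and uses the preceding identification of the circuit graph with $J(n,k+1)$ to transfer them to the circuit structure.

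You verify each item from first principles:
\begin{enumerate}
\item The degree, by counting swap pairs $(a,b)\in I\times([n]\setminus I)$, which are recovered from the neighbour $I'$ as $I\setminus I'$ and $I'\setminus I$, so distinct pairs give distinct neighbours.
\item Transitivity, by extending a bijection $I\to J$ to a permutation of $[n]$.
\item The distance, by noting that along a path $I=I_0,I_1,\dots,I_\ell=J$ the quantity $|I_j\cap J|$ changes by at most one per step. This is matched by an explicit path of $t$ single swaps. (Indexing the path this way avoids reusing $I$ for the moving vertex.)
\item The diameter, from the bounds $|I\setminus J|\le k+1$ and $|J\setminus I|\le n-k-1$ together with an explicit extremal pair.
\end{enumerate}

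The citation buys brevity. Your route buys self-containment and explicit handling of the degenerate case $n=k+1$, a single vertex where degree and diameter are both $0$. It also makes visible that $d(I,J)=|I\setminus J|=\tfrac12|I\triangle J|$, which is the symmetric-difference form the paper invokes in the next subsection.
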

\subsection{Geometric interpretation}

The Johnson graph also admits a natural polyhedral interpretation. It appears as the $1$ skeleton of the hypersimplex
\[
\Delta(n,k+1) = \{x \in [0,1]^n : x_1 + \cdots + x_n = k+1\}.
\]
\noindent
The vertices of this polytope correspond to $0$-$1$ vectors with exactly $k+1$ ones, which are in bijection with $(k+1)$ subsets of $[n]$. Two vertices are adjacent when one coordinate equal to $1$ is replaced by a coordinate equal to $0$, which corresponds precisely to the adjacency relation of the Johnson graph. This geometric interpretation highlights the strong symmetry present in the circuit structure of the discriminantal arrangement.\\

\subsection{Connection to the metric structure of the lattice}

The Johnson graph structure on circuits provides a local model for the metric behaviour of the intersection lattice.\\
\noindent
As mentioned above, vertices of the Johnson graph $J(n,k+1)$ correspond to individual circuits, and the distance between two circuits $I$ and $J$ is given by$d(I,J) = (k+1) - |I \cap J|.$
Thus distances are governed by the size of overlaps, or equivalently, by symmetric differences of subsets.\\
\noindent
This observation extends naturally to general lattice elements when they are represented by their circuit supports. Indeed, for $X, Y \in L(B(n,k))$, the supports $F(X)$ and $F(Y)$ are subsets of $C_{n,k}$, and the distance in the cover graph will be shown to satisfy
\[
d(X,Y) = |F(X) \triangle F(Y)|.
\]
\noindent
In this sense, the Johnson graph captures the metric behaviour at the level of single circuits, while the cover graph of the intersection lattice arises as a higher-dimensional extension in which vertices correspond to families of circuits rather than individual ones.\\
\noindent
Moreover, the hypercube embedding of the cover graph can be viewed as a natural generalization of the hypersimplex realization of the Johnson graph: while circuits correspond to vertices of a fixed-weight slice of the hypercube, general lattice elements correspond to arbitrary feasible subsets of circuits. This transition from fixed-size subsets to general supports leads to the emergence of a cubical geometry.\\
\noindent
Thus, the Johnson graph provides the fundamental combinatorial building block for the global metric structure of $L(B(n,k))$, and serves as a guiding model for the hypercube geometry developed in the subsequent sections.\\
\noindent
In the next section, we enhance this viewpoint from individual circuits to general lattice elements by representing each element through its circuit support. This leads to a global description of the cover graph of $\mathcal{L}({B}(n,k))$ as an isometric subgraph of a hypercube.

%%%%%%%%%%%%%%%%%%%%%%%%%%%%%%%%%%%%%%%%%%%%%%%%%%%%%%%%%%%--------------------------------------------------------------------------INTERSECTION GRAPH -----------------------------------%%%%%%%%%%%%%%%%%%%%%%%%%%%%%%%%%%%%%%%%%%%%%%%%%%%%%%%%%%%%%%%%
\section{Metric Structure via Circuit Supports}\label{Metric}

The intersection lattice $\mathcal{L}(\mathcal{B}(n,k))$ inherits a rich combinatorial structure, 
but its metric properties are not immediately clear. In particular, 
distances in the cover graph depend on how degeneracy conditions accumulate, 
and it is not clear a priori how to measure the distance between two lattice elements.\\
\noindent
A naive approach based on minimal defining families is insufficient, since such 
representations are not unique. To overcome this, we use the canonical representation using circuit supports, which allows us to encode each lattice element in a unique and natural way.

\subsection{Canonical support representation and Local moves}

For $X \in L(\mathcal{B}(n,k))$, The canonical support representation,$F(X)$ records all circuit hyperplanes containing $X$ and provides a canonical 
description of the degeneracy conditions satisfied by $X$.\\
\noindent
Recall that we restricted all support families to feasible ones throughout the paper.
\begin{proposition}
For every $X \in L(\mathcal{B}(n,k))$, we have
\[
X = \bigcap_{I \in F(X)} D_I.
\]
In particular, the map $X \mapsto F(X)$ is injective.
\end{proposition}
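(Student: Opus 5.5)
The plan is to prove the two inclusions separately and then read off injectivity.

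First, the inclusion $X \subseteq \bigcap_{I \in F(X)} D_I$ is immediate. By definition, every $I \in F(X)$ satisfies $X \subseteq D_I$, so $X$ lies in their intersection. If $F(X)$ happens to be empty (the case $X = \hat{0}$), we use the convention that the empty intersection is the ambient space, which is $\hat{0}$, so the statement still holds.

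For the reverse inclusion, we use that $X$ is an element of $\mathcal{L}(\mathcal{B}(n,k))$. By the description of the lattice in the Preliminaries, there is some family $\mathcal{F} \subseteq C_{n,k}$ with $X = \bigcap_{I \in \mathcal{F}} D_I$. Each $I \in \mathcal{F}$ then satisfies $X \subseteq D_I$, so $\mathcal{F} \subseteq F(X)$. Enlarging the index family can only shrink the intersection, hence
\[
\bigcap_{I \in F(X)} D_I \subseteq \bigcap_{I \in \mathcal{F}} D_I = X.
\]
This gives the equality. It also shows that $F(X)$ is feasible, since the intersection equals $X \neq \emptyset$.

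Injectivity follows at once. If $F(X) = F(Y)$, then
\[
X = \bigcap_{I \in F(X)} D_I = \bigcap_{I \in F(Y)} D_I = Y.
\]

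There is no real obstacle here. The only point needing care is the reverse inclusion, which rests on two facts: every lattice element is by definition an intersection of circuit hyperplanes, and $F(X)$ is the largest family whose intersection contains $X$. Any representing family, minimal or not, is therefore contained in $F(X)$.
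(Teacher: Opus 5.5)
Your proof is correct and follows the same route as the paper. The forward inclusion comes from the definition of $F(X)$, the reverse inclusion from the fact that $X$ is an intersection of circuit hyperplanes each of which contains $X$, and injectivity is read off from the resulting formula. Your version is more careful than the paper's, since you state explicitly the containment $\mathcal{F} \subseteq F(X)$ and the monotonicity of intersections, which the paper's phrase ``$X$ is itself the intersection of all hyperplanes defining it'' leaves implicit.
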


\begin{proof}
By definition, $F(X)$ consists of all circuit hyperplanes containing $X$, 
so clearly
\[
X \subseteq \bigcap_{I \in F(X)} D_I.
\]
\noindent
Conversely, suppose $x$ lies in every $D_I$ with $X \subseteq D_I$. Since $X$ is itself the intersection of all hyperplanes defining it, any point satisfying 
all these hyperplanes must lie in $X$. Hence equality holds.\\
If $F(X)=F(Y)$, then both $X$ and $Y$ are intersections of the same family of 
hyperplanes, so $X=Y$.
\end{proof}
\noindent
A natural way to understand the structure of the lattice $\mathcal{L}(\mathcal{B}(n,k))$ is to interpret each of its intersections combinatorially. So, instead of working directly with intersections of hyperplanes, we record which circuit hyperplanes are present.\\
To formalize this, we associate to every element $X\in\mathcal{L}(\mathcal{B}(n,k))$ to its support. The support representation allows us to encode lattice elements as binary vectors as follows;\\
Define
\[
\varphi : L(\mathcal{B}(n,k)) \to \{0,1\}^{C_{n,k}}, \quad 
\varphi(X) = \chi_{F(X)}.
\]
\noindent
Each coordinate corresponds to a circuit, and records whether that circuit 
contains $X$.\\
\noindent
In this way, every lattice element is viewed as a point in a high-dimensional hypercube, and the combinatorics of the intersection lattice begins to resemble that of binary vectors.\\
\noindent
Next, we study the distances in the cover graph as it allows us to pass from the combinatorial definition of the lattice to an explicit geometric model. To understand distances, we first analyze cover relations.\\
\begin{lemma}\label{lem:circuit}
If $X$ and $Y$ are adjacent in the cover graph, then their supports differ 
by exactly one circuit, i.e.,
\[
|F(X)\triangle F(Y)| = 1.
\]
\end{lemma}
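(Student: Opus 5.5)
We would first reduce to a comparable pair. In the cover graph, adjacent $X,Y$ are related by a covering relation, so without loss of generality $X \lessdot Y$, i.e.\ $Y \subsetneq X$ with $\operatorname{rk}(Y)=\operatorname{rk}(X)+1$.

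The support map is order-preserving. If $X\subseteq D_I$, then $Y\subseteq X\subseteq D_I$, so $F(X)\subseteq F(Y)$. Hence $F(X)\triangle F(Y)=F(Y)\setminus F(X)$, and it suffices to show that this difference has exactly one element.

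The lower bound comes from the earlier proposition. Since $X\neq Y$, injectivity of $X\mapsto F(X)$ gives $F(X)\neq F(Y)$, so $F(Y)\setminus F(X)\neq\emptyset$. For any $I\in F(Y)\setminus F(X)$, the set $X\cap D_I$ is a lattice element strictly below $X$ in the ambient inclusion, of codimension exactly $\operatorname{rk}(X)+1$, and it contains $Y$. Since $\operatorname{codim} Y=\operatorname{rk}(X)+1$ as well, we get $X\cap D_I=Y$. So every new circuit alone cuts $X$ down to $Y$.

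The upper bound is where the real work lies. Suppose $I\neq J$ both lie in $F(Y)\setminus F(X)$. By the previous step, $X\cap D_I=X\cap D_J=Y$, so $D_I$ and $D_J$ restrict to the same hyperplane of $X$. We would try to derive a contradiction from this. The natural route is to use the defining equations. $D_I$ is cut out by the vanishing of the $(k+1)\times(k+1)$ determinant built from the normals and translation parameters of the hyperplanes indexed by $I$. One would then argue that, for the lattice elements considered, no element of $X$'s defining ideal together with $\det_I$ can produce $\det_J$ (up to scalar) unless $I=J$.

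I expect this step to be the main obstacle, and I am not confident it holds unconditionally. For a generic choice of the $H_i^0$ it is plausible only in low ranks. For general intersection lattices, a single codimension-one step typically forces several circuit hyperplanes at once, since rank-two flats of an arrangement generally contain more than two hyperplanes. So I would first test the case $X=\hat 0$, where the claim reduces to distinctness of the hyperplanes $D_I$, which is standard. I would then check small cases such as $\mathcal{B}(n,1)$, the braid-type case, where for instance $D_{\{1,2\}}\cap D_{\{2,3\}}\subseteq D_{\{1,3\}}$ already shows that the single-circuit property can fail. If that check fails, the proof can only go through under an additional hypothesis: adjacency must be read as adding one circuit to the feasible support, i.e.\ the convention that the relevant supports are closed under single-circuit additions. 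In that case the statement follows directly from the first two steps.
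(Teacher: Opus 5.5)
Your reduction is correct: you may assume $Y$ covers $X$, the support map gives $F(X)\subseteq F(Y)$, injectivity gives $F(Y)\setminus F(X)\neq\emptyset$, and $X\cap D_I=Y$ for every $I\in F(Y)\setminus F(X)$. Your doubt about the upper bound is also justified, and you should state it as a conclusion rather than a contingency. The lemma is false, so no argument can close that step.

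Your example is a genuine counterexample. In $\mathcal{B}(3,1)$, the braid arrangement, one has $F(D_{\{1,2\}})=\{\{1,2\}\}$ but $F(D_{\{1,2\}}\cap D_{\{2,3\}})=C_{3,1}$. So a single cover changes the support by two circuits.

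The paper's proof breaks exactly where you located the difficulty. It asserts that extra circuits cannot change because that ``would either leave the rank unchanged or change it by more than one''. Leaving the rank unchanged is no contradiction. $F(Y)$ is by definition the set of \emph{all} circuits $K$ with $Y\subseteq D_K$, so it must contain every circuit dependent on $F(X)\cup\{I\}$. The argument tacitly treats the support as if it were a minimal defining family.

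The failure does not depend on $k=1$ or on a special position of the $H_i^0$. It already occurs for covers from rank $1$ to rank $2$ in every $\mathcal{B}(n,k)$ with $n\ge k+2$.

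To see this, take circuits $I,J$ with $|I\cap J|=k$, and let $t\in D_I\cap D_J$. The translates indexed by $I$ pass through a point $p$, and those indexed by $J$ pass through a point $q$. Both points lie on the $k$ translates indexed by $I\cap J$, which meet in a single point by general position, so $p=q$. Hence all $k+2$ translates indexed by $I\cup J$ are concurrent, and $D_I\cap D_J\subseteq D_K$ for every $K\in\binom{I\cup J}{k+1}$.

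The $D_I$ are pairwise distinct, since the linear relation defining $D_I$ involves exactly the indices in $I$. Therefore $D_I\cap D_J$ covers $D_I$, while $|F(D_I\cap D_J)\setminus F(D_I)|\ge k+1\ge 2$. The same phenomenon invalidates Theorem~\ref{thm:Metric}: in $\mathcal{B}(3,1)$ the top element is at distance $2$ from $\hat{0}$, but its support has three elements.

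Your proposed rescue is vacuous. Every $D_I$ contains the configurations in which all $n$ translates pass through one point, so the arrangement is central and every family of circuits is feasible. Declaring families adjacent when they differ in one circuit therefore yields the full hypercube on $2^{C_{n,k}}$. There the lemma is a tautology, but that graph is not the cover graph of $\mathcal{L}(\mathcal{B}(n,k))$: distinct families such as $\{\{1,2\},\{2,3\}\}$ and $C_{3,1}$ define the same flat.

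What survives is what you proved: $|F(X)\triangle F(Y)|\ge 1$, with equality if and only if $F(X)\cup\{I\}$ is already the full support of $X\cap D_I$, for any $I\in F(Y)\setminus F(X)$.
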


\begin{proof}
A cover relation corresponds to changing the rank by one, which means adding 
or removing a single independent degeneracy condition.\\
\noindent
In terms of supports, this corresponds to adding or removing exactly one 
circuit hyperplane. No other circuits can change, since that would either 
leave the rank unchanged or change it by more than one.
\end{proof}
\noindent
Lemma~\ref{lem:circuit} suggests that we can move through the lattice by modifying supports one circuit at a time. Now the question arises, whether such modifications always correspond to valid lattice elements.

\begin{lemma}\label{lem:non-emp}
Let $X \in L(\mathcal{B}(n,k))$ and let $I \in C_{n,k}$. Then the families 
$F(X) \cup \{I\}$ and $F(X) \setminus \{I\}$ correspond to lattice 
elements whenever they define nonempty intersections of hyperplanes.
\end{lemma}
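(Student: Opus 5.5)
The plan is to argue directly from the definitions of the intersection lattice and of feasible supports in Section~\ref{prelims}. By definition, $\mathcal{L}(\mathcal{B}(n,k))$ consists of all nonempty intersections $\bigcap_{I\in\mathcal{F}} D_I$ for families $\mathcal{F}\subseteq C_{n,k}$. So for $G$ equal to either $F(X)\cup\{I\}$ or $F(X)\setminus\{I\}$, the hypothesis that $G$ is feasible already says that $X_G=\bigcap_{J\in G}D_J$ is a lattice element. The content of the lemma is therefore the identification of this element, and whether $G$ is its canonical support. I will handle the two cases separately.

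\emph{Adding a circuit.} Take $G=F(X)\cup\{I\}$. If $I\in F(X)$, then $G=F(X)$ and Proposition~5.1 gives $X_G=X$.

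Otherwise put $Y=X_G=X\cap D_I\neq\emptyset$. Then $Y\in\mathcal{L}(\mathcal{B}(n,k))$ and $Y\subseteq X$, so $X\le Y$. Every circuit in $G$ contains $Y$, hence $G\subseteq F(Y)$. Applying Proposition~5.1 to $Y$ gives $Y=\bigcap_{J\in F(Y)}D_J$. So $Y$ is the lattice element corresponding to $G$, although in general $F(Y)$ may strictly contain $G$: other circuit hyperplanes can contain $X\cap D_I$ automatically.

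\emph{Removing a circuit.} Take $G=F(X)\setminus\{I\}$. Since $X\subseteq D_J$ for all $J\in G$, we have $X\subseteq X_G$, so $X_G$ is automatically nonempty and the feasibility hypothesis holds for free. Then $Z=X_G$ is a lattice element with $Z\le X$ and $G\subseteq F(Z)$, and again $Z=\bigcap_{J\in F(Z)}D_J$ by Proposition~5.1.

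The main obstacle is interpretive rather than technical. Read literally, ``correspond to lattice elements'' means only that $X_G\in\mathcal{L}(\mathcal{B}(n,k))$, and that follows immediately from the definition of the lattice as the set of nonempty intersections. I will prove exactly this version, and I will state explicitly that the element is $X_G$ and that its support contains $G$.

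The stronger reading, $F(X_G)=G$, would require showing that $G$ is closed: that no circuit outside $G$ contains $X_G$. This fails in general, since $D_I$ may be forced by the remaining circuits of $F(X)$ through the matroid closure. For example, when $X$ has codimension $2$ and $F(X)$ contains three circuits, removing one of them still yields $X$. I would therefore remark that supports are flats (closed sets) of the circuit-hyperplane matroid. I would also note that the single-circuit moves used later must be restricted to those $I$ for which $G$ is closed, rather than attempt to prove the stronger claim.
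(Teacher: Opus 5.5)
Your proof is correct and follows essentially the same route as the paper: a feasible family defines a lattice element directly by the definition of $\mathcal{L}(\mathcal{B}(n,k))$, adding $I$ gives $X\cap D_I$, and removing $I$ can only enlarge $X$, so in that case nonemptiness is automatic. Your extra caveat that $F(X_G)$ may strictly contain $G$ (for instance, three circuits through a codimension-$2$ flat) is also correct, and the paper's proof does not address it, even though its later single-circuit path arguments implicitly assume $F(X_G)=G$.
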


\begin{proof}
Recall that every subset of circuits corresponds to an intersection of the 
associated hyperplanes:
\[
X_F = \bigcap_{J \in F} D_J.
\]
\noindent
Thus, any feasible family $F \subseteq C_{n,k}$ defines a lattice element.\\
Now consider $F(X)$.\\
\noindent \textbf{(1) Adding a circuit.}
The family $F(X) \cup \{I\}$ corresponds to the intersection
\[
X' = X \cap D_I.
\]
Since $X$ is nonempty and $D_I$ is a hyperplane, this intersection is either 
equal to $X$ or has codimension increased by one. In particular, it defines 
a valid lattice element.

\medskip

\noindent \textbf{(2) Removing a circuit.}
The family $F(X) \setminus \{I\}$ corresponds to removing one constraint, 
which enlarges the intersection:
\[
X'' = \bigcap_{J \in F(X)\setminus\{I\}} D_J.
\]
This is again a nonempty intersection and hence a valid lattice element.\\
Thus adding or removing a single circuit yields a valid lattice element whenever the resulting support remains feasible.
\end{proof}

\subsection{Metric Structure and Isometric Embedding}

With the local structure understood, we now turn to a global description of distances.

\begin{theorem}[Metric Representation]\label{thm:Metric}
For all $X,Y \in L(\mathcal{B}(n,k))$, the distance in the cover graph satisfies
\[
d(X,Y) = |F(X)\triangle F(Y)|.
\]
\end{theorem}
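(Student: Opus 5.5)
The plan is to prove the two inequalities $d(X,Y)\ge |F(X)\triangle F(Y)|$ and $d(X,Y)\le |F(X)\triangle F(Y)|$ separately. The lower bound uses Lemma~\ref{lem:circuit}. The upper bound uses an explicit path whose intermediate vertices come from Lemma~\ref{lem:non-emp}.

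\textbf{Lower bound.} Take a shortest path $X=Z_0,Z_1,\dots,Z_m=Y$ in the cover graph, so $m=d(X,Y)$. By Lemma~\ref{lem:circuit}, $|F(Z_{i-1})\triangle F(Z_i)|=1$ for each $i$. Since $A\mapsto |A\triangle B|$ is a metric on subsets of $C_{n,k}$ (the Hamming metric on $\{0,1\}^{C_{n,k}}$), the triangle inequality gives
\[
|F(X)\triangle F(Y)|\le \sum_{i=1}^{m}|F(Z_{i-1})\triangle F(Z_i)| = m = d(X,Y).
\]
In other words, $\varphi$ is $1$-Lipschitz.

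\textbf{Upper bound.} Write $A=F(X)\setminus F(Y)$ and $B=F(Y)\setminus F(X)$. The path will pass through the family $F(X)\cap F(Y)$.
\begin{enumerate}
\item Delete the circuits of $A$ from $F(X)$ one at a time.
\item Then add the circuits of $B$ one at a time.
\end{enumerate}
Feasibility is easy here. Every family reached in the first stage is a subset of $F(X)$, so its intersection contains $X\neq\emptyset$. Every family reached in the second stage is a subset of $F(Y)$, so its intersection contains $Y$. Hence Lemma~\ref{lem:non-emp} says each intermediate family defines a lattice element. Consecutive elements differ by a single circuit, so they should be covers. This gives a walk of length $|A|+|B|=|F(X)\triangle F(Y)|$, and hence the upper bound.

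\textbf{Main obstacle.} The hardest point is to justify that each intermediate family is \emph{exactly} the canonical support of its intersection, and that consecutive intermediate elements are genuinely adjacent in the cover graph. A subfamily $G\subseteq F(X)$ can have $\bigcap_{I\in G}D_I$ contained in further hyperplanes $D_J$ with $J\notin G$. In that case $F(X_G)\supsetneq G$, and the step is not a single-circuit change.

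I would first try to handle this by choosing the deletion and insertion orders carefully. At each deletion step I would remove a circuit $I$ whose hyperplane is not implied by the remaining ones. The order would follow a reverse linear extension of the dependency structure among circuits, anticipating the dependency poset mentioned in the introduction. I would then check, using Lemma~\ref{lem:circuit} in the converse direction, that a one-circuit change between closed supports is a cover.

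If this closure issue cannot be controlled in general, it is exactly where the argument would need an additional hypothesis, such as very generic translations in the sense of Manin--Schechtman. Under such a hypothesis, dependencies among the $D_I$ are as sparse as possible, and I would verify the statement there first.
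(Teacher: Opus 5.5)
Your plan is the same as the paper's: the lower bound comes from Lemma~\ref{lem:circuit} plus the triangle inequality, and the upper bound comes from deleting $A$ and then adding $B$. It fails at the same step the paper's proof does, which is the lower bound, not the step you flagged. Lemma~\ref{lem:circuit}, which you invoke without proof, is false. So is the statement itself, for every $k\ge 1$ and $n\ge k+2$.

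Take $I,J\in C_{n,k}$ with $|I\cap J|=k$ and let $K=I\cup J$. By general position, the $k$ translated hyperplanes indexed by $I\cap J$ meet in a single point $p$. The condition $D_I$ forces the hyperplane indexed by $I\setminus J$ through $p$, and $D_J$ forces the one indexed by $J\setminus I$ through $p$. So all $k+2$ hyperplanes indexed by $K$ are concurrent, i.e.\ $Y:=D_I\cap D_J\subseteq D_L$ for every $L\in\binom{K}{k+1}$. Now $Y$ has rank $2$, so $d(\hat{0},Y)=2$ via the path $\hat{0}$, $D_I$, $Y$. But $|F(\hat{0})\triangle F(Y)|=|F(Y)|\ge k+2\ge 3$, so one of these two covers adds at least two circuits to the support. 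For $\mathcal{B}(3,1)$ (the hyperplanes $x_1=x_2$, $x_1=x_3$, $x_2=x_3$), the whole cover graph is $K_{2,3}$, which is neither a partial cube nor a median graph. This is exactly your closure phenomenon $F(X_G)\supsetneq G$, already for $G=\{I,J\}$: a single cover can add several circuits at once.

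For the upper bound, by contrast, closure is harmless and the inequality is true. Since $X_G=X_{G\setminus\{I\}}\cap D_I$, and intersecting with one hyperplane raises codimension by at most one, consecutive members of your delete-then-add sequence are either equal or a cover pair. So your walk, which passes through the meet $X_{F(X)\cap F(Y)}$ of $X$ and $Y$, gives $d(X,Y)\le|F(X)\triangle F(Y)|$. No closedness of the intermediate families is needed.

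Neither of your proposed repairs can recover equality:
\begin{itemize}
\item \emph{Reordering fails.} In the example, the only flats strictly between $\hat{0}$ and $Y$ are atoms, and any two atoms below $Y$ already generate $Y$. Hence no chain of single-circuit steps leads from $\emptyset$ to $F(Y)$.
\item \emph{A very generic hypothesis fails.} These rank-$2$ flats containing $k+2$ hyperplanes come from the rank-$2$ subarrangement $\mathcal{B}(k+2,k)$, so they exist for every choice of hyperplanes in general position.
\end{itemize}
What can be proved is only $d(X,Y)\le|F(X)\triangle F(Y)|$, and this is strict in general.
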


\begin{proof}
We prove both inequalities.\\
Each step in any path changes exactly one circuit in the support. Therefore, any path from $X$ to $Y$ must modify each element of $F(X)\triangle F(Y)$ at least once. Hence
\[
d(X,Y) \ge |F(X)\triangle F(Y)|.
\]
\noindent
We now construct a path of this length.\\
Let
\[
A = F(X)\setminus F(Y), \quad B = F(Y)\setminus F(X).
\]
\noindent
We first remove circuits in $A$ one by one, and then add circuits in $B$.\\
By Lemma~\ref{lem:non-emp} and feasibility of supports, each intermediate family defines a lattice element.\\
This produces a path of length $|A|+|B| = |F(X)\triangle F(Y)|$.\\
Since both bounds coincide, equality holds.
\end{proof}
\noindent
The distance formula shows that the support representation is not just convenient, but exact.
\begin{theorem}\label{thm:emb}
The map $\varphi$ is an isometric embedding. In particular, the cover graph 
of $\mathcal{L}(\mathcal{B}(n,k))$ is a partial cube.
\end{theorem}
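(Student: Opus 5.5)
The plan is to derive Theorem~\ref{thm:emb} directly from Theorem~\ref{thm:Metric}, together with the injectivity of $X\mapsto F(X)$ established in the proposition on canonical supports. The hypercube $Q=\{0,1\}^{C_{n,k}}$ carries the graph metric
\[
d_Q(u,v)=\#\{I\in C_{n,k} : u_I\neq v_I\},
\]
the Hamming distance. For characteristic vectors this gives $d_Q(\chi_F,\chi_G)=|F\triangle G|$. Composing with $\varphi$ yields
\[
d_Q(\varphi(X),\varphi(Y)) = |F(X)\triangle F(Y)|.
\]
By Theorem~\ref{thm:Metric} this equals $d(X,Y)$. Hence $\varphi$ preserves distances, and the first step is simply to record this identification of the Hamming metric with the cardinality of symmetric differences.

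Next I will check that $\varphi$ is a genuine graph embedding and not merely a distance-preserving map on vertices. Injectivity follows from the proposition showing $F(X)=F(Y)$ implies $X=Y$. It also follows from isometry, since $d(X,Y)=0$ forces $X=Y$. For edges, if $X$ and $Y$ are adjacent in the cover graph, then Lemma~\ref{lem:circuit} gives $|F(X)\triangle F(Y)|=1$, so $\varphi(X)$ and $\varphi(Y)$ are adjacent in $Q$. Conversely, if $\varphi(X)$ and $\varphi(Y)$ are adjacent in $Q$, then $d(X,Y)=1$ by the previous paragraph, so $X$ and $Y$ are adjacent. Thus $\varphi$ is an isomorphism of the cover graph onto the induced subgraph of $Q$ on $\varphi(L(\mathcal{B}(n,k)))$, and this subgraph is isometric in $Q$.

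Finally, a partial cube is by definition a graph admitting an isometric embedding into some hypercube, so the second assertion is immediate. I will also note that the cover graph is connected: every element is joined to $\hat 0$ by a chain of covers. This means $d$ is a finite metric, which the definition requires.

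The only real content is Theorem~\ref{thm:Metric}, which I take as given, so no step here is a serious obstacle. The one point needing care is the edge-correspondence step. There I rely on the convention that adjacency in the cover graph means distance one, so that the isometry statement transfers edges in both directions.
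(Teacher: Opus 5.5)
Your deduction is the same as the paper's: the Hamming distance between characteristic vectors is $|F\triangle G|$, so Theorem~\ref{thm:Metric} gives the isometry at once. Your extra checks (injectivity, edges in both directions, connectedness) are fine \emph{relative to} that theorem.

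The gap is that the input you take as given is false, and so is the statement. Theorem~\ref{thm:Metric} rests on Lemma~\ref{lem:circuit}, which you also invoke directly in your edge step. That lemma fails because a cover raises the rank by one but can add many hyperplanes to the support at once. Supports are closed sets (flats) of the underlying matroid. For the same reason, $F(X)\setminus\{I\}$ is usually not the support of any lattice element, so the path built in the proof of Theorem~\ref{thm:Metric} via Lemma~\ref{lem:non-emp} does not exist either.

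Concretely, let $n\ge k+2$, pick $K\subseteq[n]$ with $|K|=k+2$ and $a\ne b$ in $K$, and put $I=K\setminus\{a\}$, $J=K\setminus\{b\}$, $W=D_I\cap D_J$, a rank-$2$ element. Take a translation in $W$. The common point of the translates indexed by $I$ and the common point of those indexed by $J$ both lie on the $k$ translates indexed by $I\cap J$. These meet in exactly one point by general position, so all translates indexed by $K$ are concurrent. Hence $W\subseteq D_L$ for every $(k+1)$-subset $L\subseteq K$.

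This gives three failures:
\begin{enumerate}
\item $W$ covers $D_I$, yet $|F(D_I)\triangle F(W)|\ge k+1\ge 2$, contradicting Lemma~\ref{lem:circuit}.
\item $d(\hat{0},W)=2$, while $|F(\hat{0})\triangle F(W)|=|F(W)|\ge k+2\ge 3$, contradicting Theorem~\ref{thm:Metric}.
\item $\hat{0}$ and $W$ are at distance $2$ with at least three common neighbours $D_L$. In an isometric subgraph of a hypercube, two vertices at distance $2$ have at most two common neighbours. So the cover graph is not a partial cube under any embedding.
\end{enumerate}

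The smallest case is $\mathcal{B}(3,1)$, the braid arrangement $x_1=x_2$, $x_1=x_3$, $x_2=x_3$, whose cover graph is $K_{2,3}$. So no proof of Theorem~\ref{thm:emb} is possible for $n\ge k+2$. What survives of your argument is only that $\varphi$ is injective and order-preserving, with $X\le Y\iff F(X)\subseteq F(Y)$.
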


\begin{proof}
By the theorem~\ref{thm:Metric}, the distance between $X$ and $Y$ equals the Hamming 
distance between $\varphi(X)$ and $\varphi(Y)$. Hence $\varphi$ preserves 
distances and defines an isometric embedding into the hypercube.
\end{proof}
\begin{theorem}[Median graph property]\label{thm:Median}
The cover graph of $L(B(n,k))$ is a median graph. That is, for any three vertices $X, Y, Z \in L(B(n,k))$, there exists a unique vertex $M$ such that
\[
d(X,Y) = d(X,M) + d(M,Y), \quad
d(X,Z) = d(X,M) + d(M,Z), \quad
d(Y,Z) = d(Y,M) + d(M,Z).
\]
\end{theorem}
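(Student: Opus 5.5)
The plan is to use the isometric embedding $\varphi$ of Theorem~\ref{thm:emb} and reduce the median property to the corresponding property of the hypercube $\{0,1\}^{C_{n,k}}$. By Theorem~\ref{thm:Metric}, all distances are Hamming distances of supports. So a vertex $M$ lies on a geodesic between $X$ and $Y$ exactly when
\[
F(X)\cap F(Y)\subseteq F(M)\subseteq F(X)\cup F(Y).
\]
This is the standard interval description in the hypercube: equality $|A\triangle B| = |A\triangle C|+|C\triangle B|$ holds if and only if $A\cap B\subseteq C\subseteq A\cup B$. I would verify it coordinatewise, since each coordinate contributes $0$ or $1$ to both sides and the triangle inequality holds coordinatewise.

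\textbf{Existence.} Given $X,Y,Z$, define the majority support
\[
F_M=(F(X)\cap F(Y))\cup(F(X)\cap F(Z))\cup(F(Y)\cap F(Z)).
\]
Coordinatewise, $F_M$ satisfies the three betweenness conditions, and in the cube it is the only subset that does: for a coordinate in at least two of the supports, each pair containing two of them forces membership, and dually for absence. The key step is then to show that $F_M$ is the support of an actual lattice element. That is, the family must be feasible, and $\bigcap_{I\in F_M}D_I$ must have support exactly $F_M$, not a larger closure. Feasibility is the easy part. Each $D_I$ is a linear hyperplane through the origin (a degeneracy condition on translation parameters), so every intersection contains $0$ and is nonempty, and Lemma~\ref{lem:non-emp} applies. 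Once $F_M=F(M)$ for some $M$, the three distance identities follow from Theorem~\ref{thm:Metric}.

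\textbf{Uniqueness.} Any other $M'$ satisfying the three identities has $F(M')$ between each pair, so $F(M')=F_M$ coordinatewise. Injectivity of $X\mapsto F(X)$ then gives $M'=M$.

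The main obstacle is the closure issue in the existence step. Supports $F(X)$ are closed sets of the matroid on $C_{n,k}$ (flats), and a union of intersections of flats need not be a flat. I would try to show that $F_M$ is closed by arguing along the path constructed in the proof of Theorem~\ref{thm:Metric}. Moving from $X$ toward $Y$ one circuit at a time, with the intermediate families being supports, I would choose the order of removals and additions so as to pass through $F_M$: first remove $F(X)\setminus F_M$, then add $F_M\setminus F(X)$. The paper's earlier results assert that every intermediate family is a lattice support, and I would lean on that assertion to conclude $F_M=F(M)$. If one instead tries to prove closure directly from matroid axioms, this is where I expect difficulty, since it requires the intersection lattice to behave like a Boolean lattice on these flats.
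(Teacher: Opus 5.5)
\textbf{The closure step fails, and the statement itself is false.}

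You have isolated the right step. For $F_M$ to equal $F(M)$ it is not enough that $\bigcap_{I\in F_M}D_I\neq\emptyset$; as you say, that is automatic because every $D_I$ is a linear hyperplane. The family $F_M$ must also be closed, i.e.\ equal to the full support of that intersection. Your plan for this step is to lean on the claim that every intermediate family along the path in the proof of Theorem~\ref{thm:Metric} is a support. That claim is false. Lemma~\ref{lem:non-emp} only shows that $X_F=\bigcap_{I\in F}D_I$ is a lattice element, never that $F(X_F)=F$. The paper's proof has the same hole: after its consistency argument it simply declares $\phi(M)=m$ ``by construction''. Already for the braid arrangement $\mathcal{B}(3,1)$, with $D_{12},D_{13},D_{23}$ given by $x_i=x_j$ in $\mathbb{R}^3$, the line $L=\{x_1=x_2=x_3\}$ equals $D_{12}\cap D_{13}$ but also lies in $D_{23}$. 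Take $X=L$, $Y=D_{12}$, $Z=D_{13}$. Your majority family consists of the two circuits $\{1,2\}$ and $\{1,3\}$, and it is not the support of any lattice element. Your path step ``remove $F(X)\setminus F_M$'' already leaves the lattice here.

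In fact no repair is possible: for every $k\ge 1$ and $n\ge k+2$ the statement is false.
\begin{itemize}
\item Fix a $(k+2)$-set $I\subseteq[n]$. The normal of each $D_J$ with $J\subset I$, $|J|=k+1$, is the unique-up-to-scale linear relation among the generic forms indexed by $J$.
\item So all $k+2$ of these normals lie in the $2$-dimensional space of relations among the forms indexed by $I$. Any two of them are independent, since their supports differ.
\item Hence $W=\bigcap_{J\subset I}D_J$ has rank $2$, covers each of the $k+2\ge 3$ atoms $D_J$, and $F(W)$ consists of exactly these $k+2$ circuits.
\item The cover graph is bipartite (by rank parity), so any three such atoms are pairwise at distance $2$. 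Both $\hat{0}$ and $W$ satisfy all three median equations, so uniqueness fails. For $\mathcal{B}(3,1)$ the whole cover graph is $K_{2,3}$.
\end{itemize}

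The same configuration refutes the results you build on. $W$ covers $D_J$ although $|F(D_J)\triangle F(W)|=k+1$, and $d(\hat{0},W)=2$ although $|F(\hat{0})\triangle F(W)|=k+2$. So Lemma~\ref{lem:circuit} and Theorem~\ref{thm:Metric} both fail. Your uniqueness step, which reads graph distances as Hamming distances of supports, therefore has no foundation. Your closing suspicion was correct: closure of majority families would require the flats of $\mathcal{L}(\mathcal{B}(n,k))$ to behave like arbitrary subsets of $C_{n,k}$, and they do not.
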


\begin{proof}
By Theorem~\ref{thm:emb}, the map
\[
\phi : L(B(n,k)) \to \{0,1\}^{C_{n,k}}, \quad \phi(X) = \chi_{F(X)},
\]
is an isometric embedding, where distances correspond to Hamming distance.\\
\noindent
Let $x = \phi(X)$, $y = \phi(Y)$, and $z = \phi(Z)$. In the hypercube $\{0,1\}^{C_{n,k}}$, the unique median is given coordinate-wise by
\[
m_I = \mathrm{maj}(x_I, y_I, z_I), \quad I \in C_{n,k}.
\]
\noindent
Define
\[
F(M) = \{ I \in C_{n,k} : m_I = 1 \}
= (F(X) \cap F(Y)) \cup (F(Y) \cap F(Z)) \cup (F(Z) \cap F(X)).
\]
\noindent
We show that $F(M)$ is feasible.\\
\noindent
Let $I \in F(M)$. Then $I$ belongs to at least two of the supports $F(X), F(Y), F(Z)$. Without loss of generality, suppose $I \in F(X) \cap F(Y)$. Then both $X$ and $Y$ lie in the hyperplane $D_I$, and hence their intersection $X \cap Y$ is contained in $D_I$.\\
\noindent
Thus every hyperplane indexed by $F(X) \cap F(Y)$ contains the nonempty set $X \cap Y$. The same holds for $F(Y) \cap F(Z)$ and $F(Z) \cap F(X)$.\\
\noindent
Now consider the family $F(M)$. Each constraint in $F(M)$ is satisfied by at least one of the nonempty sets
\[
X \cap Y, \quad Y \cap Z, \quad Z \cap X.
\]
\noindent
Since $X, Y, Z$ are intersections of hyperplanes, these sets are affine subspaces, and any two among them are consistent intersections of hyperplanes. It follows that the collection of constraints indexed by $F(M)$ is jointly consistent, and hence
\[
\bigcap_{I \in F(M)} D_I \neq \emptyset.
\]
\noindent
Thus $F(M)$ defines a lattice element $M$.\\
By construction, $\phi(M) = m$. Since medians in the hypercube are unique and $\phi$ is an isometric embedding, $M$ is the unique vertex lying on shortest paths between each pair among $X, Y, Z$. This proves the result.
\end{proof}
\subsection{Geodesic structure}

We now describe shortest paths in the cover graph.
\begin{definition}[Dependency poset]
Let $X, Y \in L(B(n,k))$ and define
\[
A = F(X) \setminus F(Y), \quad 
B = F(Y) \setminus F(X).
\]
\noindent
Consider the set
\[
S = A \cup B.
\]
\noindent
We define a partial order $\preceq$ on $S$ as follows:\\
\noindent
For $I, J \in S$, we declare $I \preceq J$ if, in every sequence of single-circuit modifications transforming $F(X)$ into $F(Y)$ through feasible supports, the modification corresponding to $I$ must occur before that of $J$.\\
\noindent
Equivalently, $I \preceq J$ if there exists no feasible sequence in which $J$ is performed before $I$.
\end{definition}
\begin{theorem}[Geodesic characterization]\label{thm:geo}
Let $X, Y \in L(B(n,k))$. Then every geodesic between $X$ and $Y$ is obtained by modifying the circuits in $F(X) \triangle F(Y)$ exactly once, in an order that respects the dependency poset $(S, \preceq)$.\\
In particular, geodesics are in bijection with the linear extensions of $(S, \preceq)$.
\end{theorem}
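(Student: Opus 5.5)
I would prove the statement in two stages: first identify geodesics with orderings of the symmetric difference, then match the admissible orderings with the linear extensions of $(S,\preceq)$.

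\textbf{Stage 1 (geodesics are orderings of $S$).} Let $\gamma = (X=X_0, X_1, \dots, X_m=Y)$ be a geodesic. By Theorem~\ref{thm:Metric}, $m = |F(X)\triangle F(Y)| = |S|$. By Lemma~\ref{lem:circuit}, each step $X_{i-1}X_i$ toggles exactly one circuit $I_i$. Every element of $S$ must be toggled an odd number of times, and every circuit outside $S$ an even number of times. Since there are only $|S|$ toggles in total, each element of $S$ is toggled exactly once and nothing else is toggled. This gives a map $\gamma \mapsto (I_1,\dots,I_m)$ from geodesics to total orderings of $S$. The map is injective, because by the injectivity of $X\mapsto F(X)$ the support $F(X_i)=F(X)\triangle\{I_1,\dots,I_i\}$ determines each $X_i$.

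\textbf{Stage 2 (the image is exactly the linear extensions).} Conversely, take an ordering $(I_1,\dots,I_m)$ of $S$. Toggling the circuits in this order produces supports $F_i = F(X)\triangle\{I_1,\dots,I_i\}$. If every $F_i$ is feasible, then by Lemma~\ref{lem:non-emp} each $F_i$ is a lattice element $X_i$, consecutive elements are adjacent, and we obtain a path of length $|S|$, hence a geodesic. So geodesics are in bijection with the \emph{feasible orderings} of $S$. It remains to show these are exactly the linear extensions of $\preceq$.

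\begin{itemize}
\item \emph{Feasible orderings are linear extensions.} If $I\preceq J$, then by definition no feasible sequence performs $J$ before $I$, so every feasible ordering respects $\preceq$.
\item \emph{Linear extensions are feasible orderings.} This is the step I expect to be the main obstacle. The difficulty is that $\preceq$ only records pairwise precedences forced in \emph{all} feasible sequences. A priori, infeasibility could arise from higher-order obstructions, for example a configuration where $I_3$ may precede $I_1$ or $I_2$ individually but not both. In that case a linear extension need not be feasible.
\end{itemize}

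My first attempt at the second bullet would use a swapping argument. Take a linear extension, and among feasible orderings choose one agreeing with it on the longest possible prefix. At the first disagreement, show that the desired next circuit can be moved forward while preserving feasibility, using the median/majority closure from Theorem~\ref{thm:Median}. The idea is that the majority of the supports $F_{i-1}$, $F_{i-1}\triangle\{J\}$ and $F(Y)$ is again feasible, which certifies the exchange. Concretely, if $J$ is not forced after any unperformed element, I would argue that toggling $J$ at the current stage lands on the median of the current vertex, the vertex reached by toggling $J$ elsewhere, and $Y$; that median is a lattice element by Theorem~\ref{thm:Median}.

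If this exchange step fails in general, there is a fallback: reinterpret $\preceq$ as the order generated by pairwise "must precede" relations that are witnessed locally. Median graphs guarantee such a pairwise description: intervals in median graphs are distributive lattices whose maximal chains correspond to linear extensions of the poset of join-irreducibles, i.e. of the $\Theta$-classes separating $X$ and $Y$. I would then identify this poset with $(S,\preceq)$ via the embedding $\varphi$ of Theorem~\ref{thm:emb}.
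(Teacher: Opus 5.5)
Your Stage~1 and the first bullet of Stage~2 are the paper's argument. The two routes part at the converse. The paper simply asserts that every linear extension of $(S,\preceq)$ ``respects all feasibility constraints''. You correctly note that $\preceq$ only records pairwise precedences, so for an arbitrary family of admissible orderings this is false (your $I_3$ configuration is such a case), and some structural input is needed. Your fallback supplies it, and it can be written out directly.

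Let $\mathcal{R}=\{T\subseteq S : F(X)\triangle T \text{ is feasible}\}$, and let $Z_T$ be the vertex with support $F(X)\triangle T$. By Theorem~\ref{thm:Median}, the median of $X, Z_T, Z_{T'}$ is $Z_{T\cap T'}$, and the median of $Y, Z_T, Z_{T'}$ is $Z_{T\cup T'}$. So $\mathcal{R}$ is a ring of sets containing $\emptyset$ and $S$. Hence $\mathcal{R}$ is exactly the family of down-sets of the relation ``every member of $\mathcal{R}$ containing $J$ contains $I$'': intersections give the principal down-sets and unions give the rest. This relation has two properties you need.
\begin{itemize}
\item It is antisymmetric, because a geodesic adds one element at a time.
\item It coincides with $\preceq$, because prefixes of admissible orderings lie in $\mathcal{R}$ and every member of $\mathcal{R}$ is a prefix of some admissible ordering.
\end{itemize}
Admissible orderings are then exactly the orderings all of whose prefixes are down-sets, i.e.\ the linear extensions. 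This is the Birkhoff argument you cite, specialised to the geodesic interval between $X$ and $Y$. What your route buys over the paper's is that it makes explicit that the bijection rests on the median property, which the paper's proof never invokes.

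Your primary exchange argument, however, fails as stated. Let $Z_T$ be the current vertex and $V$ any vertex whose toggle set $P$ contains $J$. The median of $Z_T$, $V$, $Y$ has toggle set $T\cup P$. This equals $T\cup\{J\}$ only if $P\subseteq T\cup\{J\}$, which is the very thing to be shown. If $V$ lies further along the chosen admissible ordering, the median is just $V$. No single median certifies the exchange.

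The repair is to use $K\not\preceq J$ for each unperformed $K$. This gives a prefix $P_K\in\mathcal{R}$ with $J\in P_K$ and $K\notin P_K$. Intersect these $P_K$ via medians with $X$, then adjoin $T$ via a median with $Y$; the result is $T\cup\{J\}\in\mathcal{R}$. This is the ring-of-sets argument again.

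Two smaller remarks. First, adjacency of consecutive $X_i$ in Stage~2 follows from Theorem~\ref{thm:Metric}, not from Lemma~\ref{lem:non-emp}. Second, your proof, like the paper's, is only as sound as Lemma~\ref{lem:circuit} and Theorems~\ref{thm:Metric} and~\ref{thm:Median}. These fail for actual discriminantal arrangements. For $\mathcal{B}(3,1)$, the braid arrangement, the lattice is $\Pi_3$, whose cover graph is $K_{2,3}$. There a single cover adds two circuits at once, and the three atoms have two medians, $\hat{0}$ and $\hat{1}$.
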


\begin{proof}
By the distance formula, we have
\[
d(X,Y) = |F(X) \triangle F(Y)| = |S|.
\]
\noindent
Thus any geodesic must modify each circuit in $S$ exactly once.\\
Each step in a path corresponds to adding or removing a single circuit, and the intermediate supports must remain feasible. Therefore, a sequence of modifications defines a valid path if and only if every intermediate support corresponds to a feasible family.\\
By definition of the dependency poset, a modification corresponding to $I$ must precede that of $J$ whenever performing $J$ before $I$ would violate feasibility. Hence any valid sequence of modifications must respect the partial order $\preceq$.\\
Conversely, any linear extension of $(S, \preceq)$ defines an ordering of the circuits that respects all feasibility constraints, and thus yields a valid path from $X$ to $Y$. Since each circuit is modified exactly once, such a path has length $|S|$ and is therefore a geodesic.\\
This establishes a bijection between geodesics and linear extensions of the dependency poset.
\end{proof}

\begin{remark}
If all circuits in $S$ are independent in the sense that no feasibility constraints arise, then the dependency poset is trivial, and the number of geodesics is $|S|!$.
\end{remark}
\noindent
This reflects the fact that geodesics in the cover graph behave analogously 
to shortest paths in a hypercube, where each coordinate is modified exactly once.

\section{Convexity and Cube Structure of Intervals}\label{Convexity}

The hypercube embedding obtained in the previous section suggests that the metric structure of $\mathcal{L}(\mathcal{B}(n,k))$ may admit a stronger geometric description. While we now understand distances globally, it is natural to ask whether this cubical behavior persists locally, inside intervals of the lattice.\\
\noindent 
We show that this is indeed the case: every interval inherits a canonical cube structure.\\

\subsection{Interval Structure and Convexity}

Let $X,Y \in L(\mathcal{B}(n,k))$ with $X \le Y$. Then $F(X) \subseteq F(Y).$\\
\noindent
Thus every element $Z$ in the interval $[X,Y]$ satisfies $F(X) \subseteq F(Z) \subseteq F(Y).$ Thus, elements in the interval correspond precisely to supports lying between two fixed sets. We now show that this description is compatible with the metric structure.

\begin{theorem}
For any $X,Y \in L(\mathcal{B}(n,k))$, the interval $[X,Y]$ is convex in the cover graph.
\end{theorem}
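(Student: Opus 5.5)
The plan is to derive convexity directly from the Metric Representation Theorem~\ref{thm:Metric} and the support description of the interval. Recall that a subset $K$ of the vertex set is convex if every geodesic between two vertices of $K$ lies entirely in $K$. By the discussion preceding the statement, $Z \in [X,Y]$ if and only if $F(X) \subseteq F(Z) \subseteq F(Y)$, with $Z$ feasible. I first record that the converse inclusion also holds: if $F(X)\subseteq F(Z)\subseteq F(Y)$ then $Y \subseteq Z \subseteq X$, because the proposition above gives $Z=\bigcap_{I\in F(Z)} D_I$. Hence membership in the interval is a purely combinatorial condition on supports.

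Next, take $Z_1, Z_2 \in [X,Y]$ and a geodesic $Z_1 = W_0, W_1, \dots, W_m = Z_2$ in the cover graph. Theorem~\ref{thm:Metric} gives $m = |F(Z_1)\triangle F(Z_2)|$. Lemma~\ref{lem:circuit} shows that consecutive supports differ by exactly one circuit. A counting argument then shows that each circuit of $F(Z_1)\triangle F(Z_2)$ is toggled exactly once, and that no circuit outside this symmetric difference is ever toggled. Such a circuit would have to be toggled at least twice, which would push the length above $m$. This is the same reasoning as in Theorem~\ref{thm:geo}.

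Consequently every intermediate support satisfies
\[
F(Z_1)\cap F(Z_2) \subseteq F(W_i) \subseteq F(Z_1)\cup F(Z_2).
\]
Since $F(X)\subseteq F(Z_1)\cap F(Z_2)$ and $F(Z_1)\cup F(Z_2)\subseteq F(Y)$, we get $F(X)\subseteq F(W_i)\subseteq F(Y)$. Each $W_i$ is a vertex of the cover graph, hence a lattice element with feasible support. The first step then gives $W_i \in [X,Y]$, so the geodesic stays in the interval and convexity follows.

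The main obstacle is the first step. We need that the interval $[X,Y]$ in the lattice order is exactly the set of lattice elements whose supports are sandwiched between $F(X)$ and $F(Y)$. The direction "$Z \le Y$ implies $F(Z)\subseteq F(Y)$" needs monotonicity of $F$. This holds because $Y\subseteq Z\subseteq D_I$ gives $I\in F(Y)$, and the reverse direction uses the injectivity and reconstruction proposition. A second point to handle with care is the hidden assumption in the statement that $X \le Y$. If $X\not\le Y$, I would interpret $[X,Y]$ as empty, or as the geodesic interval $\{Z : d(X,Z)+d(Z,Y)=d(X,Y)\}$. In the latter case the same toggling argument, applied with $F(X)\cap F(Y)$ and $F(X)\cup F(Y)$ as the bounds, gives convexity as well.
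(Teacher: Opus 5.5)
Your argument is the paper's own proof with its implicit steps spelled out. You make explicit the toggle count and the converse ``sandwiched support implies membership in $[X,Y]$''; that converse and your monotonicity check are fine. But you inherit the paper's gap: Lemma~\ref{lem:circuit} and Theorem~\ref{thm:Metric}, which you use to get one-circuit steps and $m=|F(Z_1)\triangle F(Z_2)|$, are false as soon as $n\ge k+2$.

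Take $I,J\in C_{n,k}$ with $|I\cap J|=k$. The $k$ translated hyperplanes indexed by $I\cap J$ always meet in exactly one point $p$. So on $D_I\cap D_J$ all $k+2$ hyperplanes indexed by $I\cup J$ pass through $p$, and hence $D_I\cap D_J\subseteq D_K$ for every $K\in\binom{I\cup J}{k+1}$. Thus $D_I\cap D_J$ has rank $2$ and covers $D_I$, yet $|F(D_I\cap D_J)\triangle F(D_I)|\ge k+1\ge 2$. Likewise $d(\hat{0},D_I\cap D_J)=2<k+2\le|F(D_I\cap D_J)|$.

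Your displayed sandwich is then genuinely false. For $Z_1=D_I$ and $Z_2=D_J$, the path $D_I,\ D_I\cap D_J,\ D_J$ is a geodesic whose middle support is not contained in $F(Z_1)\cup F(Z_2)=\{I,J\}$. The same configuration refutes your fallback for geodesic intervals. For distinct $K_1,K_2,K_3\in\binom{I\cup J}{k+1}$, the geodesic interval of $D_{K_1},D_{K_2}$ contains $\hat{0}$ and $D_I\cap D_J$ but not $D_{K_3}$, although $\hat{0},D_{K_3},D_I\cap D_J$ is a geodesic. In particular the cover graph contains $K_{2,3}$, so it is not even a partial cube.

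The statement itself is true, and you can prove it with ranks instead of supports. Since $\mathcal{B}(n,k)$ is central, $\mathcal{L}(\mathcal{B}(n,k))$ is a geometric lattice with $W\vee Z=W\cap Z$ and $\operatorname{rk}=\operatorname{codim}$. If $W'$ covers $W$, then $W'\cap Z$ is either $W\cap Z$ or a hyperplane in it, so $\operatorname{rk}(W'\vee Z)-\operatorname{rk}(W\vee Z)\in\{0,1\}$.

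Fix $Z_2$ and set $f(W)=2\operatorname{rk}(W\vee Z_2)-\operatorname{rk}(W)-\operatorname{rk}(Z_2)$. Then $f$ changes by exactly $\pm1$ along every edge and $f(Z_2)=0$. Going up from $Z_1$ to $Z_1\vee Z_2$ and down to $Z_2$ takes $f(Z_1)$ steps. Hence $d(Z_1,Z_2)=f(Z_1)$, and $f$ drops by exactly one at every step of a geodesic.

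An upward step $W\to W'$ that drops $f$ forces $W'\le W'\vee Z_2=W\vee Z_2$. Inductively, every vertex of the geodesic stays $\le Z_1\vee Z_2$.

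Now consider a downward step $W\to W'$ with $W\ge M:=Z_1\wedge Z_2$ but $W'\not\ge M$. It would give $W'\vee M=W$, hence $W'\vee Z_2\ge W\vee Z_2$ (as $M\le Z_2$). So $W'\vee Z_2=W\vee Z_2$ and $f(W')=f(W)+1$, which is impossible.

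Therefore every geodesic lies in $[Z_1\wedge Z_2,\,Z_1\vee Z_2]\subseteq[X,Y]$. In support language the correct sandwich is $F(Z_1)\cap F(Z_2)\subseteq F(W)\subseteq F(Z_1\vee Z_2)$. The upper bound is the closure of $F(Z_1)\cup F(Z_2)$, not the union.
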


\begin{proof}
Let $Z_1,Z_2 \in [X,Y]$. Then
\[
F(X) \subseteq F(Z_1),F(Z_2) \subseteq F(Y).
\]
\noindent
Any shortest path between $Z_1$ and $Z_2$ modifies circuits in $F(Z_1)\triangle F(Z_2)$ one at a time. Since both supports lie between $F(X)$ and $F(Y)$, all intermediate supports also lie between them. Hence every vertex along the path remains in $[X,Y]$.
\end{proof}

\subsection{Cube Structure of Intervals}
We now strengthen the previous result by giving an explicit description of intervals.
\begin{theorem}
For any $X,Y \in L(\mathcal{B}(n,k))$, the interval $[X,Y]$ is isomorphic to the 
hypercube of dimension $|F(Y)\setminus F(X)|$.
\end{theorem}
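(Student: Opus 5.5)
The plan is to exhibit an explicit graph isomorphism between $[X,Y]$ and the cube $\{0,1\}^{F(Y)\setminus F(X)}$, using the support map $\varphi$ of Theorem~\ref{thm:emb}. Since $X\le Y$, we have $F(X)\subseteq F(Y)$, and every $Z\in[X,Y]$ satisfies $F(X)\subseteq F(Z)\subseteq F(Y)$. Set $D=F(Y)\setminus F(X)$ and define
\[
\psi:[X,Y]\to 2^{D},\qquad \psi(Z)=F(Z)\setminus F(X).
\]
Equivalently, $\psi$ is $\varphi$ restricted to $[X,Y]$, followed by projection onto the coordinates indexed by $D$. On the other coordinates every element of the interval agrees with $\varphi(X)$.

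First, $\psi$ is injective. Two elements with the same image have the same support, so they coincide by the injectivity of $X\mapsto F(X)$ proved above.

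Second, $\psi$ preserves distances. By Theorem~\ref{thm:Metric}, for $Z_1,Z_2\in[X,Y]$ we have
\[
d(Z_1,Z_2)=|F(Z_1)\triangle F(Z_2)|=|\psi(Z_1)\triangle\psi(Z_2)|,
\]
because the supports agree outside $D$. Combined with the convexity of $[X,Y]$ just proved, distances inside the interval equal distances in the ambient cover graph. Hence adjacency in $[X,Y]$ corresponds exactly to Hamming distance $1$ in $2^D$, which is also the content of Lemma~\ref{lem:circuit}. So $\psi$ is an isomorphism onto the induced subgraph on its image.

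Third, and this is the step I expect to be the main obstacle, $\psi$ must be surjective. For each $G\subseteq D$ we need a lattice element $Z$ with $F(Z)=F(X)\cup G$. The natural candidate is
\[
Z=X_{F(X)\cup G}=\bigcap_{I\in F(X)\cup G} D_I .
\]
It is nonempty because it contains $Y$; this is the feasibility provided by Lemma~\ref{lem:non-emp}. It lies between $X$ and $Y$ in the order.

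The delicate point is showing $F(Z)=F(X)\cup G$ rather than a strictly larger family. Intersecting hyperplanes can force additional circuit hyperplanes to contain $Z$, which is a closure phenomenon. I would try to build $G$ one circuit at a time, starting from $F(X)$ and adding elements of $D$ in sequence. At each step I would apply Lemma~\ref{lem:non-emp} and Lemma~\ref{lem:circuit}, arguing that each addition changes the support by exactly one circuit. Then, by induction on $|G|$, the support stays equal to $F(X)\cup G$.

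I should flag that this inductive step is exactly where the argument relies on the paper's standing convention that supports change one circuit at a time and that intermediate families are feasible supports. Without that convention the closure can genuinely be larger. For instance, a rank-$2$ element lying on three or more circuit hyperplanes has a non-Boolean interval below it. So the surjectivity step is where I would check most carefully that the framework of Section~\ref{Metric} really delivers $F(X_{F(X)\cup G})=F(X)\cup G$.

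Granting surjectivity, $\psi$ is a bijective, distance-preserving map onto $2^{D}$. Hence $[X,Y]$ is isomorphic to the hypercube of dimension $|F(Y)\setminus F(X)|$, with its order corresponding to inclusion of subsets of $D$, giving the Boolean lattice structure.
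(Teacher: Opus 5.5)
The step you flagged, surjectivity, is a genuine gap, and it cannot be closed: the statement is false. Your own parenthetical remark already gives a counterexample, so this is not a matter of checking more carefully.

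Concretely, take $\mathcal{B}(3,1)$ (translating three points on a line). It is the braid arrangement, with $D_{\{i,j\}}=\{t\in\mathbb{R}^3 : t_i=t_j\}$. Take $X=\mathbb{R}^3$, so $F(X)=\emptyset$, and $Y=\{t_1=t_2=t_3\}$, so $F(Y)=C_{3,1}$ has three elements. The interval $[X,Y]$ consists of $X$, the three planes $D_{\{1,2\}},D_{\{1,3\}},D_{\{2,3\}}$, and $Y$. That is five elements (the partition lattice $\Pi_3$), not $2^3=8$. For $G=\{\{1,2\},\{1,3\}\}$, your candidate $X_{F(X)\cup G}=D_{\{1,2\}}\cap D_{\{1,3\}}$ equals $Y$, whose support is all of $C_{3,1}$. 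So no element has image $G$, and $\psi$ is not onto. The same happens for every $k$: in $\mathcal{B}(k+2,k)$ all $k+2$ hyperplanes contain the codimension-$2$ top element. So $[\hat 0,\hat 1]$ has $k+4$ elements rather than $2^{k+2}$.

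Appealing to the framework of Section~\ref{Metric} does not rescue your induction. Lemma~\ref{lem:non-emp} only says that $X\cap D_I$ is a lattice element; it says nothing about $F(X\cap D_I)$. The ingredient that would force $F(X\cap D_I)=F(X)\cup\{I\}$ is Lemma~\ref{lem:circuit}, and that lemma is false: in the example, $Y$ covers $D_{\{1,2\}}$, yet their supports differ by two circuits. In fact Lemma~\ref{lem:circuit} would force $|F(Z)|=\operatorname{rk}(Z)$ for every $Z$. That is impossible for $n\ge k+2$, since the top element has rank $n-k$ but lies on all $\binom{n}{k+1}$ hyperplanes.

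Your distance step has the same defect, since Theorem~\ref{thm:Metric} also fails here: $d(X,Y)=2$ while $|F(X)\triangle F(Y)|=3$.

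The paper's proof takes exactly your route: a bijection $[X,Y]\to 2^{F(Y)\setminus F(X)}$, with adjacency read off from single-circuit differences. It simply asserts that every $F(X)\cup T$ is a support, so it has precisely the hole you identified. Granting surjectivity therefore means granting a false claim.

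What is true is weaker. The map $\psi$ is an order-embedding of $[X,Y]$ into $2^{F(Y)\setminus F(X)}$. It is onto exactly when $\operatorname{rk}(Y)-\operatorname{rk}(X)=|F(Y)\setminus F(X)|$, that is, when the circuits in $F(Y)\setminus F(X)$ impose independent conditions on $X$.
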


\begin{proof}
Let $S = F(Y)\setminus F(X).$ Every element $Z \in [X,Y]$ is obtained by choosing a subset $T \subseteq S$ 
and defining
\[
F(Z) = F(X) \cup T.
\]
\noindent
This gives a bijection between $[X,Y]$ and $2^S$. Two elements are adjacent if their supports differ by one circuit, which corresponds to adjacency in the hypercube. Thus $[X,Y]$ is isomorphic to $Q_{|S|}$.
\end{proof}
\noindent
We illustrate the cube structure of intervals in Figure~\ref{fig:cube}.
\begin{figure}[h]
\centering
\begin{tikzpicture}[scale=1.2, every node/.style={circle, draw, minimum size=6mm}]

% Bottom square
\node (000) at (0,0) {$F(X)$};
\node (100) at (2,0) {$F(X)\cup\{a\}$};
\node (010) at (0,2) {$F(X)\cup\{b\}$};
\node (110) at (2,2) {$F(X)\cup\{a,b\}$};

% Top square
\node (001) at (0.8,0.8) {$F(X)\cup\{c\}$};
\node (101) at (2.8,0.8) {$F(X)\cup\{a,c\}$};
\node (011) at (0.8,2.8) {$F(X)\cup\{b,c\}$};
\node (111) at (2.8,2.8) {$F(Y)$};

% Bottom edges
\draw (000)--(100);
\draw (000)--(010);
\draw (100)--(110);
\draw (010)--(110);

% Top edges
\draw (001)--(101);
\draw (001)--(011);
\draw (101)--(111);
\draw (011)--(111);

% Vertical edges
\draw (000)--(001);
\draw (100)--(101);
\draw (010)--(011);
\draw (110)--(111);

\end{tikzpicture}

\caption{An interval $[X,Y]$ in $L(B(n,k))$ represented as a 3-dimensional cube. 
Each vertex corresponds to a feasible support of the form $F(X)\cup T$, where $T \subseteq \{a,b,c\} = F(Y)\setminus F(X)$. 
Edges correspond to adding a single circuit, and distances agree with Hamming distance.}\label{fig:cube}
\end{figure}
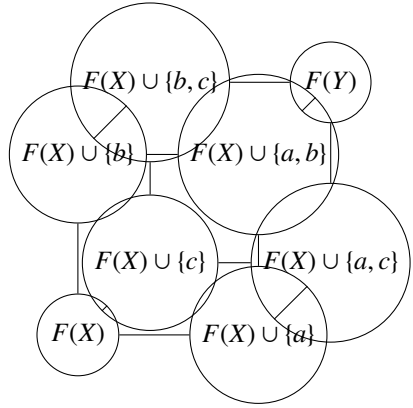
This result shows that the cubical structure observed at the global level does in fact persists locally. Not only is the cover graph a partial cube, but every interval decomposes as a canonical hypercube, whose dimension is determined by the number of independent circuit extensions between its endpoints. In particular, the lattice admits a regular metric geometry, where both global distances and local substructures are governed by the same combinatorial principle.

%%%---------Section 5----------------%%%%%%%%%%%%%%%%%%

\section{Asymptotic Behaviour of Circuit Supports}\label{Asymp}

The geometric description developed in the previous sections shows that the cover graph of $\mathcal{L}(\mathcal{B}(n,k))$ admits a cubical structure, in which distances are 
determined by symmetric differences of circuit supports. In particular, the distance between two lattice elements $X$ and $Y$ depends on the size of the overlap $|F(X)\cap F(Y)|$.\\
\noindent
This naturally leads to the following question: what is the typical size of the overlap between two circuit supports when the number of circuits becomes large?\\
\noindent
In this section we address this question in a probabilistic setting. We show that, in the sparse regime, overlaps between random circuit families are typically negligible, leading to distances that concentrate near their maximal possible value. This phenomenon reflects the high-dimensional hypercube geometry underlying the lattice.\\
\noindent
We now study the typical behaviour of overlaps between random circuit supports and its implications for the metric structure of the cover graph. Let $N = \binom{n}{k+1}$ denote the total number of circuits. Let $F$ and $G$ be independent uniformly chosen subsets of $C_{n,k}$ of size $r$. Define
\[
T = |F \cap G|.
\]
\noindent
The quantity $T$ measures the overlap between the supports and directly influences distances in the lattice. We begin by analyzing its distribution.

\begin{theorem}[Poisson approximation with error bound]
The random variable $T$ follows a hypergeometric distribution with parameters $(N, r, r)$.\\
\noindent
If $r = o(\sqrt{N})$, then $T$ converges in distribution to a Poisson random variable with parameter
\[
\lambda = \frac{r^2}{N}.
\]
\noindent
Moreover, there exists an absolute constant $C > 0$ such that for every integer $t \geq 0$,
\[
\left| \mathbb{P}(T = t) - e^{-\lambda} \frac{\lambda^t}{t!} \right|
\leq C \frac{r^3}{N^2}.
\]
\end{theorem}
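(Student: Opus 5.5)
The plan is to handle the two claims separately: the exact law of $T$ first, and then the Poisson approximation through a Stein--Chen bound in total variation, which controls every point probability at once.

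\textbf{Step 1 (exact law).} Condition on $F$. Since $G$ is uniform among $r$-subsets of $C_{n,k}$, independent of $F$, the count $T=|F\cap G|$ is the number of elements of a fixed $r$-set $F$ caught by a uniform $r$-sample drawn without replacement from $N$ objects. Hence
\[
\mathbb{P}(T=t\mid F)=\binom{r}{t}\binom{N-r}{r-t}\Big/\binom{N}{r}.
\]
This does not depend on $F$, so $T$ is hypergeometric with parameters $(N,r,r)$. From the standard formulas,
\[
\mu:=\mathbb{E}T=\frac{r^2}{N}=\lambda,
\qquad
\operatorname{Var}T=\lambda\,\frac{(N-r)^2}{N(N-1)} .
\]

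\textbf{Step 2 (Stein--Chen setup).} Fix $F=\{I_1,\dots,I_r\}$ and write $T=\sum_{j=1}^r X_j$ with $X_j=\mathbf 1\{I_j\in G\}$, so each $\mathbb{P}(X_j=1)=r/N$. Sampling without replacement makes these indicators negatively related. Concretely, conditioning on $I_j\in G$ can be realised by a monotone coupling: if $I_j\notin G$, swap a uniformly chosen element of $G$ for $I_j$. This can only decrease the other indicators. We may therefore invoke the Barbour--Holst--Janson bound for negatively related indicators:
\[
d_{TV}\bigl(\mathcal L(T),\mathrm{Po}(\lambda)\bigr)\le (1-e^{-\lambda})\Bigl(1-\frac{\operatorname{Var}T}{\lambda}\Bigr).
\]

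\textbf{Step 3 (the estimate).} From the variance in Step 1,
\[
1-\frac{\operatorname{Var}T}{\lambda}=1-\frac{(N-r)^2}{N(N-1)}\le \frac{2r}{N}.
\]
Using $1-e^{-\lambda}\le \lambda=r^2/N$, this gives
\[
d_{TV}\le \frac{2r^3}{N^2}.
\]
Pointwise differences are bounded by total variation, since $|\mathbb{P}(T=t)-e^{-\lambda}\lambda^t/t!|\le d_{TV}$ for every $t$. This yields the stated bound with $C=2$. If $r=o(\sqrt N)$, then
\[
\frac{r^3}{N^2}=\Bigl(\frac{r}{\sqrt N}\Bigr)^3 N^{-1/2}\to 0,
\]
so $d_{TV}(\mathcal L(T),\mathrm{Po}(\lambda))\to 0$. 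This is the convergence claim, read as convergence to $\mathrm{Po}(\lambda)$ along a sequence in which $\lambda=r^2/N$ may vary; when $\lambda$ has a limit, it is ordinary convergence in distribution.

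\textbf{Main obstacle.} The delicate point is getting the sharp order $r^3/N^2$ rather than $r/N$. A naive two-stage comparison, hypergeometric to binomial and then binomial to Poisson, loses a factor: the hypergeometric-to-binomial distance is of order $r/N$, which is larger than $r^3/N^2$ when $\lambda<1$. The factor $(1-e^{-\lambda})\le\min(1,\lambda)$ in the Stein--Chen bound is what recovers the extra $\lambda$. So the step to verify carefully is the negative-relation coupling in Step 2, which licenses the variance form of the bound.

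If one prefers to avoid that machinery, a fallback is a direct computation of $\log\mathbb{P}(T=t)$ via $\log(1-x)=-x+O(x^2)$, comparing term by term with $e^{-\lambda}\lambda^t/t!$. There the errors $\sum_{i<r} r\,i/N^2$ and $r\cdot(r/N)^2$ are both $O(r^3/N^2)$. However, obtaining uniformity in $t$ that way is messier.
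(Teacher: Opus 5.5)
Your argument is correct and follows the same route as the paper: identify $T$ as hypergeometric, then invoke the Barbour--Holst--Janson Poisson approximation for sampling without replacement. Where the paper only cites that result, you actually supply the negative-relation coupling, the variance identity giving $1-\operatorname{Var}T/\lambda\le 2r/N$, the explicit constant $C=2$ (valid for every $r\le N$), and the step from total variation to point probabilities. One caveat, affecting only your side remark: the true hypergeometric--binomial distance here is itself $O(\lambda r/N)$ (compare both with $\mathrm{Po}(\lambda)$), so the two-stage route loses only when crude bounds are used, and nothing in your proof depends on that remark.
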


\begin{proof}
Since $F$ and $G$ are chosen uniformly with size $r$, the size of their intersection $T$ follows a hypergeometric distribution with parameters $(N, r, r)$.\\
\noindent
When $r = o(\sqrt{N})$, the dependence between the underlying indicator variables becomes negligible. By standard results on Poisson approximation for sampling without replacement (see \cite{BarbourHolstJanson1992}), the total variation distance between $T$ and a Poisson random variable with parameter $\lambda = r^2/N$ is bounded by a constant multiple of $r^3/N^2$.\\
\noindent
This bound tends to zero as $n \to \infty$, proving convergence in distribution.
\end{proof}
\noindent
The Poisson approximation shows that overlaps are typically small in the sparse regime. We now refine this by identifying when intersections occur with high probability.

\begin{theorem}[Sharp threshold for intersection]\label{thm:thres}
Let $r = r(n)$ and $N = \binom{n}{k+1}$. Then:
\begin{itemize}
\item If $r = o(\sqrt{N})$, then $\mathbb{P}(F \cap G \neq \emptyset) \to 0$,
\item If $r = \omega(\sqrt{N})$, then $\mathbb{P}(F \cap G \neq \emptyset) \to 1$.
\end{itemize}
\end{theorem}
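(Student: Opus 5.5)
The plan is to prove both parts directly from the exact law of $T$, without relying on the Poisson limit theorem (whose error bound is only useful when $r=o(\sqrt N)$). Condition on $F$. Since $G$ is a uniform $r$-subset of $C_{n,k}$ and $|F|=r$, we get
\[
\mathbb{P}(F\cap G=\emptyset)=\binom{N-r}{r}\Big/\binom{N}{r}=\prod_{i=0}^{r-1}\frac{N-r-i}{N-i}=\prod_{i=0}^{r-1}\Bigl(1-\frac{r}{N-i}\Bigr),
\]
valid for $2r\le N$; if $2r>N$, two $r$-subsets must intersect and the probability is $0$. Both regimes then come from elementary two-sided estimates of this product. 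This is the hypergeometric law already identified in the preceding theorem.

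For the subcritical case $r=o(\sqrt N)$, use a first-moment (union) bound: $\mathbb{P}(T\ge1)\le \mathbb{E}T = r^2/N\to0$. Alternatively, from the product, $1-x\ge e^{-x-x^2}$ for small $x$, so with $x_i=r/(N-i)\le 2r/N$ we get $\mathbb{P}(T=0)\ge \exp(-2r^2/N-O(r^3/N^2))\to1$. The first-moment bound is cleaner and is the one I would write. This also agrees with the previous theorem, since $e^{-\lambda}\to1$ when $\lambda\to0$.

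For the supercritical case $r=\omega(\sqrt N)$, we may assume $2r\le N$. Using $1-x\le e^{-x}$ and $N-i\le N$, each factor is at most $e^{-r/N}$, so
\[
\mathbb{P}(F\cap G=\emptyset)\le e^{-r^2/N}\to0.
\]
This works uniformly in $r$ and needs no second-moment or Janson-type argument. Note that $N\to\infty$ is implicit in both regimes, since otherwise neither $o(\sqrt N)$ nor $\omega(\sqrt N)$ is meaningful with $r\le N$. I would add a remark that the exponent $\lambda=r^2/N$ is exactly the Poisson parameter, so the threshold sits at $\lambda\asymp1$: for $r=c\sqrt N$ the limit is $1-e^{-c^2}$.

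The only real care point is the supercritical regime, where the Poisson approximation of the previous theorem is not available and one must not invoke it. The direct product bound sidesteps this, so the main obstacle reduces to writing the conditioning argument cleanly: $F$ and $G$ are independent, and symmetry makes the conditional probability independent of $F$. No structure of $\mathcal{L}(\mathcal{B}(n,k))$ is used beyond $N=\binom{n}{k+1}$.
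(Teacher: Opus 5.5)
Your proof is correct, and its skeleton matches the paper's: both reduce to the exact hypergeometric product $\mathbb{P}(F\cap G=\emptyset)=\prod_{i=0}^{r-1}\bigl(1-\frac{r}{N-i}\bigr)$. The difference is in how the product is estimated.

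The paper takes logarithms and uses the two-sided expansion $\log(1-x)=-x+O(x^2)$ to get $\log\mathbb{P}(F\cap G=\emptyset)=-r^2/N+O(r^3/N^2)$. It then reads off both regimes from this single formula. You instead use two one-sided bounds: the first-moment bound $\mathbb{P}(T\ge1)\le\mathbb{E}T=r^2/N$ below the threshold, and $1-x\le e^{-x}$ together with $N-i\le N$ above it.

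Your route is more robust in the supercritical regime. The paper's error term $O(r^3/N^2)=O\bigl((r^2/N)(r/N)\bigr)$ is negligible against $r^2/N$ only when $r=o(N)$, and the logarithm is undefined once $2r>N$. So the paper's supercritical argument tacitly assumes $r/N\to0$. Your bound $e^{-r^2/N}$ holds uniformly in $r$, and you treat $2r>N$ explicitly.

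What the paper's two-sided expansion gives in return is genuine asymptotics in the critical window $r\asymp\sqrt N$. That expansion is what justifies your closing remark that the limit is $1-e^{-c^2}$ at $r=c\sqrt N$. Neither of your one-sided bounds does, and neither does your lossy lower bound $\exp(-2r^2/N-\cdots)$.
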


\begin{proof}
We compute
\[
\mathbb{P}(F \cap G = \emptyset) = \frac{\binom{N - r}{r}}{\binom{N}{r}}.
\]

This can be written as
\[
\mathbb{P}(F \cap G = \emptyset)
= \prod_{i=0}^{r-1} \left(1 - \frac{r}{N - i}\right).
\]

Taking logarithms and using $\log(1 - x) = -x + O(x^2)$, we obtain
\[
\log \mathbb{P}(F \cap G = \emptyset)
= -\frac{r^2}{N} + O\left(\frac{r^3}{N^2}\right).
\]

Exponentiating gives
\[
\mathbb{P}(F \cap G = \emptyset)
= \exp\left(-\frac{r^2}{N} + O\left(\frac{r^3}{N^2}\right)\right).
\]

If $r = o(\sqrt{N})$, then $r^2/N \to 0$, so $\mathbb{P}(F \cap G = \emptyset) \to 1$.  
If $r = \omega(\sqrt{N})$, then $r^2/N \to \infty$, so $\mathbb{P}(F \cap G = \emptyset) \to 0$.
\end{proof}
\noindent
We now translate these probabilistic results into geometric information about the lattice.

\begin{theorem}[Metric concentration]
Let $X_F$ and $X_G$ be lattice elements corresponding to supports $F$ and $G$ of size $r$. Then
\[
d(X_F, X_G) = 2r - 2|F \cap G|.
\]
\noindent
Moreover, if $r = o(\sqrt{N})$, then
\[
\mathbb{P}(d(X_F, X_G) = 2r) \to 1.
\]
\end{theorem}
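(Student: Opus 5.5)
The plan has two parts. First I establish the deterministic distance identity from the Metric Representation theorem, and then I obtain the probabilistic statement from the sharp threshold result.

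\textbf{Distance identity.} I treat $F$ and $G$ as the (feasible) supports of $X_F$ and $X_G$, so that $F(X_F)=F$ and $F(X_G)=G$. This is the standing convention of the paper that support families are feasible and canonical. Theorem~\ref{thm:Metric} then gives $d(X_F,X_G)=|F\triangle G|$. Since $|F|=|G|=r$, inclusion--exclusion yields
\[
|F\triangle G| = |F|+|G|-2|F\cap G| = 2r-2|F\cap G|,
\]
which is the first claim.

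\textbf{Concentration.} By the identity just proved, the event $\{d(X_F,X_G)=2r\}$ coincides exactly with the event $\{F\cap G=\emptyset\}$, that is, with $\{T=0\}$. Theorem~\ref{thm:thres} states that $\mathbb{P}(F\cap G\neq\emptyset)\to 0$ when $r=o(\sqrt N)$. Hence $\mathbb{P}(d(X_F,X_G)=2r)\to 1$. As a cross-check, the Poisson approximation theorem gives $\mathbb{P}(T=0)=e^{-\lambda}+O(r^3/N^2)$ with $\lambda=r^2/N\to 0$, and $r^3/N^2\le (r^2/N)^{3/2}\to 0$, so the same limit follows from that result as well.

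\textbf{Main obstacle.} The only delicate point is justifying that $F$ and $G$ really are the canonical supports $F(X_F)$ and $F(X_G)$. A priori, a uniformly random $r$-family might be infeasible, or it might not be closed, in which case its intersection would lie in further hyperplanes $D_I$. I handle this by reading the statement under the paper's standing convention that all support families are feasible and canonical, so that $X_F$ is defined and $F(X_F)=F$. Once that identification is made, the rest is immediate.
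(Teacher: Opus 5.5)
Your route is exactly the paper's: it also writes $d(X_F,X_G)=|F\triangle G|=2r-2|F\cap G|$ via Theorem~\ref{thm:Metric} and then quotes Theorem~\ref{thm:thres}. You correctly single out the weak step, but the appeal to a ``standing convention'' does not close it. The paper's convention is only feasibility, i.e.\ $X_F\neq\emptyset$, and this is vacuous here: the configuration with all $n$ hyperplanes through one point lies on every $D_I$. Nothing in the paper makes supports closed. Nor can you impose closedness, because Theorem~\ref{thm:thres} concerns $F,G$ uniform over \emph{all} $r$-subsets of $C_{n,k}$, and conditioning on closedness changes that law.

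Without closedness, $F(X_F)=F$ fails. Suppose $I,J\in F$ with $|I\cap J|=k$, and set $K=I\cup J$. The $k$ hyperplanes indexed by $I\cap J$ meet in a single point. On $D_I\cap D_J$, every hyperplane indexed by $K$ passes through that point, so $X_F\subseteq D_{K\setminus\{l\}}$ for all $l\in K$. Now take $l\in I\cap J$, $F=\{I,J\}$ and $G=\{I,K\setminus\{l\}\}$. Then $X_F=X_G$, so $d(X_F,X_G)=0\neq 2=2r-2|F\cap G|$. For $k=1$, $n=3$ this is $F=\{12,13\}$, $G=\{12,23\}$ in the braid arrangement. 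Such pairs are not rare. A uniform $r$-family contains on average about $(k+1)(n-k-1)r^2/(2N)$ of them, which tends to infinity for, e.g., $r=\sqrt N/\log n$, well inside the regime $r=o(\sqrt N)$.

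Even for closed supports, Theorem~\ref{thm:Metric} cannot carry the argument. In $\mathcal{B}(4,1)$ (the braid arrangement, with lattice $\Pi_4$), the closed supports $\{12,34\}$ and $\{13,24\}$ are rank-$2$ flats with disjoint supports. Both are covered by the flat $\{t_1=t_2=t_3=t_4\}$, so their cover-graph distance is $2$, not $|F\triangle G|=4$.

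The probabilistic conclusion also fails as stated. In the cover graph of the graded lattice, going down to $\hat 0$ and back up gives $d(X,Y)\le\operatorname{rk}X+\operatorname{rk}Y\le 2n$. For $k\ge 2$ one may take $n<r=o(\sqrt N)$, e.g.\ $k=2$ and $r=\lceil n^{5/4}\rceil$. Then $\mathbb{P}(d(X_F,X_G)=2r)=0$.

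What your argument, like the paper's, actually proves is the set-level statement $|F\triangle G|=2r-2|F\cap G|$ together with $\mathbb{P}(|F\triangle G|=2r)\to 1$. Transferring this to lattice distances would need both $F(X_F)=F$ and a correct distance formula, and neither is available.
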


\begin{proof}
By the symmetric difference identity,
\[
d(X_F, X_G) = |F \triangle G| = |F| + |G| - 2|F \cap G| = 2r - 2|F \cap G|.
\]

From the theorem~\ref{thm:thres}, when $r = o(\sqrt{N})$, we have $\mathbb{P}(|F \cap G| = 0) \to 1$. Substituting into the distance formula gives the result.
\end{proof}
\noindent
Taken together, these results show that in the sparse regime, random supports have negligible overlap and therefore correspond to lattice elements that are at near-maximal distance. This provides a probabilistic explanation for the high-dimensional cubical geometry observed in the cover graph.\\
\noindent
Thus, the asymptotic behaviour of circuit supports provides further evidence 
that the intersection lattice of the discriminantal arrangement exhibits a 
robust cubical geometry, both locally (through intervals) and globally (through metric concentration).
\section{Conclusion}

In this paper, we developed a metric perspective on the intersection lattice $\mathcal{L}(\mathcal{B}(n,k))$ of the discriminantal arrangement. By encoding lattice elements through their circuit supports, we showed that the cover graph admits a canonical cubical geometry: distances are governed by symmetric differences, forming an isometric embedding into a hypercube. This structure persists locally, as every interval is itself a cube, revealing a strong and uniform form of combinatorial convexity. The asymptotic analysis further indicates that, in the sparse regime, overlaps between random circuit families are negligible, so typical elements lie near maximal distance. Together, these results show that both the local and global geometry of the lattice are governed by a single underlying cubical principle. This establishes $\mathcal{L}(\mathcal{B}(n,k))$ as a natural class of median graphs arising from hyperplane arrangements.\\
\noindent Future directions naturally arise from this viewpoint. It would be of interest to refine the metric description further, particularly in understanding extremal and typical distances beyond the sparse regime, and to investigate whether stronger geometric properties emerge from this cubical structure. More broadly, an important question is whether similar metric and probabilistic phenomena persist for intersection lattices of other classes of hyperplane arrangements, suggesting a more general geometric framework underlying these combinatorial objects.
% ------------------------------------------------

\end{document}